\patchcmd{\abstract}{\scshape\abstractname}{\textbf{\abstractname}}{}{}
\DeclareMathAlphabet{\mathcal}{OMS}{cmsy}{m}{n}
\DeclareSymbolFont{operators}{OT1}{ztmcm}{m}{n}
\DeclareSymbolFont{letters}{OML}{ztmcm}{m}{it}
\DeclareSymbolFont{symbols}{OMS}{ztmcm}{m}{n}
\DeclareSymbolFont{largesymbols}{OMX}{ztmcm}{m}{n}
\DeclareSymbolFont{bold}{OT1}{ptm}{bx}{n}
\DeclareSymbolFont{italic}{OT1}{ptm}{m}{it}
\DeclareMathSymbol{\omicron}{0}{operators}{`\o}
\DeclareMathAlphabet{\mathpzc}{OT1}{pzc}{m}{it}
\DeclareSymbolFont{operators}{OT1}{txr}{m}{n}
\def\operator@font{\mathgroup\symoperators}
\DeclareSymbolFont{italic}{OT1}{txr}{m}{it}
\DeclareSymbolFontAlphabet{\mathrm}{operators}
\DeclareMathAlphabet{\mathbf}{OT1}{txr}{bx}{n}
\DeclareMathAlphabet{\mathit}{OT1}{txr}{m}{it}
\SetMathAlphabet{\mathit}{bold}{OT1}{txr}{bx}{it}
\DeclareSymbolFont{letters}{OML}{txmi}{m}{it}
\DeclareSymbolFont{lettersA}{U}{txmia}{m}{it}
\DeclareSymbolFontAlphabet{\mathfrak}{lettersA}
\DeclareSymbolFont{symbols}{OMS}{txsy}{m}{n}
\renewcommand\abstractname{\scshape\bfseries Abstract}
\renewenvironment{proof}[1][\proofname]{\par \pushQED{\qed} \normalfont
  \topsep6\p@\@plus6\p@ \trivlist \itemindent\z@
  \item[\hskip\labelsep\bfseries
    #1\@addpunct{.}]\ignorespaces
}{
  \popQED\endtrivlist\@endpefalse
}
    \renewcommand{\theequation}{{\thesection}.\@arabic\c@equation} 
\def\section{\@ifstar\unnumberedsection\numberedsection}
\def\numberedsection{\@ifnextchar[
  \numberedsectionwithtwoarguments\numberedsectionwithoneargument}
\def\unnumberedsection{\@ifnextchar[
  \unnumberedsectionwithtwoarguments\unnumberedsectionwithoneargument}
\def\numberedsectionwithoneargument#1{\numberedsectionwithtwoarguments[#1]{#1}}
\def\unnumberedsectionwithoneargument#1{\unnumberedsectionwithtwoarguments[#1]{#1}}
\def\numberedsectionwithtwoarguments[#1]#2{%
  \ifhmode\par\fi
  \removelastskip
  \vskip 4ex\goodbreak
  \refstepcounter{section}%
  \noindent
  \begingroup
  \leavevmode\centering\scshape\bfseries
  \thesection.
  #2
  \par
  \endgroup
  \vskip 1ex\nobreak
  \addcontentsline{toc}{section}{%
    \protect\numberline{\thesection}%
    #1}%
  }
\def\unnumberedsectionwithtwoarguments[#1]#2{%
  \ifhmode\par\fi
  \removelastskip
  \vskip 2ex\goodbreak
  \noindent
  \begingroup
  \leavevmode\centering\scshape\bfseries
  \leavevmode\centering\scshape\bfseries
  #2
  \par
  \endgroup
  \vskip 1ex\nobreak
  \addcontentsline{toc}{section}{%
    #1}%
}
\def\@seccntformat#1{\csname mythe#1\endcsname}
\let\latex@subsection\subsection
\def\subsection{\@ifstar{\refstepcounter{subsection}\latex@subsection*}{\latex@subsection}}
\def\@makechapterhead#1{%
  \vspace*{40\p@}%
  {\parindent \z@ \raggedright \normalfont
    \interlinepenalty\@M
    \Huge \bfseries #1\par \nobreak
    \vskip 40\p@
  }}
\let\latex@l@chapter\l@chapter
\def\l@chapter#1#2{\begingroup\let\numberline\@gobble\latex@l@chapter{#1}{#2}\endgroup}
\theoremstyle{plain}
\newtheorem{Th}{Theorem}[section]
\newtheorem{Prop}[Th]{Proposition}
\newtheorem{Cor}[Th]{Corollary}
\theoremstyle{definition}
\newtheorem{Rem}[Th]{Remark}
\newtheorem{Ex}[Th]{Example}
\newtheorem{Def}[Th]{Definition}
\def\bf{\textbf}
\def\it{\textit}
\def\tn{\textnormal}
\def\leq{\leqslant}
\def\geq{\geqslant}
\def\R{{\mathds R}}
\def\N{{\mathds N}}
\begin{document}
\vspace*{-4mm}
\title{An initial condition  reconstruction  in Hamilton-Jacobi equations}

\author{\vspace*{-0.2cm}{Arkadiusz Misztela$^{\tn{\textdagger}}$ and S\l{}awomir Plaskacz $^{\tn{\textdaggerdbl}}$}\vspace*{-0.2cm}}
\thanks{\textdagger\, Institute of Mathematics, University of Szczecin, Wielkopolska 15, 70-451 Szczecin, Poland; e-mail: arkadiusz.misztela@usz.edu.pl}
\thanks{\textdaggerdbl\, Nicolaus Copernicus University, Gagarina 11, 87-100 Toruń, Poland; e-mail: plaskacz@mat.umk.pl}

\begin{abstract}
We describe the family of initial conditions for Hamilton-Jacobi equations (HJE) corresponding to optimal control problems that can be retrieved by solving HJE backward in time.
 \\ \vspace{0mm}

\hspace{-1cm}
\noindent \bf{\scshape Keywords.} Hamilton-Jacobi equations, forward and backward viscosity solutions, value \\\hspace*{-0.55cm} function, Bolza problem, inverse Bolza problem.


\vspace{3mm}\hspace{-1cm}
\noindent \bf{\scshape Mathematics Subject Classification.} 49N45, 35F21, 49L25, 35Q93, 34A60.
\end{abstract}

\maketitle

\pagestyle{myheadings}  \markboth{\small{\scshape Arkadiusz Misztela and Sławomir Plaskacz}
}{\small{\scshape Reconstruction of Initial Condition}}

\thispagestyle{empty}

\vspace{-0.8cm}

\section{Introduction}\label{section1}

\noindent Suppose that a function $u:[0,T]\times\R^n\to\R$ is a solution of the Hamilton-Jacobi equation
\begin{equation}\label{HJ1}
\triangledown_{\!t}\,u+H(x,\triangledown_{\!x}\,u)=0 \;\;\;\tn{in}\;\;\; (0,T)\times\R^n
\end{equation}
and it satisfies an initial  condition $u(0,\cdot)=g_0(\cdot)$, where $g_0:\R^n\to \R$ is a given function. We define a terminal function $g_T(\cdot):=u(T,\cdot)$. We say that the initial function $g_0$ can be  reconstructed if knowing the function $g_T$ and the Hamiltonian $H$ in \eqref{HJ1} we can obtain (reconstruct) the function $g_0$. If $u$ is a classical solution of \eqref{HJ1} with the initial condition $u(0,\cdot)=g_0(\cdot)$, then the function $w:=u$ is a (unique) classical solution of \eqref{HJ1} satisfying the terminal condition $w(T,\cdot)=g_T(\cdot)$ and $g_0$ is reconstructible as $g_0(\cdot)=w(0,\cdot)$. If $u$ is a  viscosity solution of \eqref{HJ1}, then this scheme usually  does not work, as we can see in the following example. The example is preceded by the definition of forward and backward viscosity solutions of Hamilton-Jacobi equation.

\begin{Def}\label{vs}
We say that a continuous function $u:[0,T]\times\R^n\to\R$ is a forward viscosity solution of \eqref{HJ1} if for every $(t,x)\in(0,T)\times\R^n$ we have
\begin{align}
& u_t+H(x,u_x)\leq 0\;\;\;\tn{for all}\;\;\;(u_t,u_x)\in\partial_{+}u(t,x),\label{fsub}\\
& u_t+H(x,u_x)\geq 0\;\;\;\tn{for all}\;\;\;(u_t,u_x)\in\partial_{-}u(t,x),\label{fsuper}
\end{align}
where $(u_t,u_x)\in\partial_{+}u(t,x)$ if there exists $\varphi\in C^1((0,T)\times\R^n,\R)$ such that $u-\varphi$ has a\linebreak local maximum at $(t,x)$ with $(u_t,u_x)=\triangledown\varphi(t,x)$
and $(u_t,u_x)\in \partial_{-}u(t,x)$ if there exists\linebreak $\varphi\in C^1((0,T)\times\R^n,\R)$ such that $u-\varphi$ has a local minimum at $(t,x)$ with $(u_t,u_x)=\triangledown\varphi(t,x)$.

We say that a continuous function $w:[0,T]\times\R^n\to\R$ is  a backward viscosity solution of \eqref{HJ1} if for every $(t,x)\in(0,T)\times\R^n$ we have
\begin{align}
& w_t+H(x,w_x)\geq 0\;\;\;\tn{for all}\;\;\;(w_t,w_x)\in\partial_{+}w(t,x),\label{bsub}\\
& w_t+H(x,w_x)\leq 0\;\;\;\tn{for all}\;\;\;(w_t,w_x)\in\partial_{-}w(t,x).\label{bsuper}
\end{align}
\end{Def}

\begin{Rem}
The terms  `backward viscosity solution' and  `forward viscosity solution' have not been used earlier (according to our knowledge).
We used them in the paper to highlight their relations to the (forward) Bolza problem and the inverse (backward)\linebreak Bolza problem. Obviously, what we call `forward viscosity solution' is nothing else as a `viscosity solution' (comp.\cite{B-CD}). Moreover, a function $w(t,x)$ is a backward viscosity\linebreak solution of  $\triangledown_{\!t}\,w+H(x,\triangledown_{\!x}\,w)=0$ if and only if the function $u(t,x)=-w(T-t,x)$ is \linebreak a  viscosity solution of  $\triangledown_{\!t}\,u+H(x,-\triangledown_{\!x}\,u)=0$.
\end{Rem}

\begin{Ex}
We consider the Hamilton-Jacobi equation
\begin{equation}\label{HJ2}
\triangledown_{\!t}\,u+|\triangledown_{\!x}\,u|=0 \;\;\;\tn{in}\;\;\; (0,T)\times\R
\end{equation}
with the initial function $g_0(x)=\max\{\,0,\,T-|x|\,\}$. The unique forward viscosity solution of~\eqref{HJ2} satisfying the initial condition $u(0,x)=g_0(x)$  is given by
$$u(t,x)=\max\{\,0,\,T-t-|x|\,\}.$$
Hence $g_T(x)=u(T,x)=0$. The function $w\equiv 0$ is the unique solution (classical, so any) of \eqref{HJ2} satisfying the terminal condition $w(T,\cdot)=g_T(\cdot)$. Therefore the initial function $g_0$ cannot be reconstructed.
\end{Ex}

 The question arises whether the initial condition can be reconstructed in any other case except the case when the solution $u$ of \eqref{HJ1} is classical. The following examples illustrate that it is possible.

\begin{Ex}
We consider the Hamilton-Jacobi equation \eqref{HJ2} with the initial function $g_0(x)=|x|$. The unique forward viscosity solution of \eqref{HJ2} satisfying the initial condition  $u(0,x)=g_0(x)$ is given by
$$u(t,x)=\max\{\,0,\,|x|-t\,\}.$$
So  $g_T(x)=u(T,x)=\max\{0,|x|-T\}$. The function $w:=u$ is the unique backward viscosity solution of \eqref{HJ2} satisfying the terminal condition  $w(T,\cdot)=g_T(\cdot)$. Therefore the initial function $g_0$ can be reconstructed.
\end{Ex}

\begin{Def}\label{Reconstruction}
We say that an initial function $g_0\in C(\R^n,\,\R)$ is reconstructible in time $T$ ($T>0$) if $g_0(\cdot)=w(0,\cdot)$, where $w:[0,T]\times \R^n\to\R$ is a unique backward viscosity solution of \eqref{HJ1} satisfying $w(T,\cdot)=u(T,\cdot)$ and $u:[0,T]\times \R^n\to\R$ is a unique forward viscosity solution of \eqref{HJ1} satisfying $u(0,\cdot)=g_0(\cdot)$.
\end{Def}

Note that if a forward viscosity solution $u$ of \eqref{HJ1}  satisfying  $u(0,\cdot)=g_0(\cdot)$  is a backward  viscosity solution of \eqref{HJ1}, then an initial function $g_0$ is reconstructible. In the monograph\linebreak of Bardi and Capuzzo-Dolcetta \cite{B-CD} a solution of \eqref{HJ1} that is forward and backward\linebreak viscosity solution is called a bilateral solution. In \cite{C-F} Cannarsa-Frankowska showed that the value function is a bilateral solution at points being interior points of optimal trajectories. A global result saying that a forward viscosity solution $u$ is a bilateral one holds if the function $u$ is semiconvex. A sufficient condition is given in Theorem 7.4.13 that is presented in the monograph of Cannarsa-Sinestrari \cite{C-S-2004}. Below we provide an example of a bilateral solution $u$ that is neither semiconvex nor semiconcave.

\begin{Ex}\label{ncnc}
The function   $u:[0,T]\times\R\to\R$ given by the formula
$$u(t,x)=\max\{\,1-|x|\operatorname{e}^{t-T},\,|x|\operatorname{e}^{T-t}-1,\,(\operatorname{e}^{2T}-1)/(\operatorname{e}^{2T}+1)\,\}$$ is a bilateral solution of \eqref{HJ1} with $H(x,p)=|x|\,|p|$. Therefore the initial function $g_0(x)=u(0,x)=\max\{1-|x|\operatorname{e}^{-T},|x|\operatorname{e}^T-1\}$ is reconstructible in time $T>0$ and $u=w$. Moreover, we notice that the initial  function $g_0$ may be obtained as the restriction $g_0(\cdot)=v(0,\cdot)$ of the backward solution $v:[0,T]\times\R\to\R$ given by
$$v(t,x)=\max\{\,1-|x|\operatorname{e}^{t-T},\,|x|\operatorname{e}^{T-t}-1\,\}.$$
\end{Ex}

Our main observation in the paper is that if $g_0=v(0,\cdot)$, where $v:[0,T]\times\R^n\to\R$ is a backward viscosity solution of \eqref{HJ1}, then the initial function $g_0$ is reconstructible in time $T$. This result can be formulated as follows.

\begin{Th}\label{main}
Assume that the Hamiltonian $H:\R^n\times\R^n\to \R$ satisfies
\begin{equation}\label{A}
\left\{\begin{array}{l}
p\to H(x,p)\;\; \it{is convex for every}\;\;  x\in\R^n,\\[0.3mm]
|H(x,p)-H(x,q)|\leq M(1+|x|)\,|p-q|\;\; \it{for all}\;\; x,p,q\in\R^n,\\[0.3mm]
|H(x,p)-H(y,p)|\leq M(1+|p|)\,|x-y|\;\; \it{for all}\;\; x,y,p\in\R^n,\\[0.3mm]
\it{and some constant}\;\; M\geq 0.
\end{array}\right.\tag{A}
\end{equation}
Then an initial function $g_0\in C(\R^n,\,\R)$ is reconstructible in time $T>0$ if and only if there exists a backward viscosity solution $v$ of \eqref{HJ1} defined on $[0,T]\times\R^n$ and such that $g_0(\cdot)=v(0,\cdot)$. Furthermore, if the function $u:[0,T]\times\R^n\to\R$ is a forward viscosity solution of \eqref{HJ1} satisfying $u(0,\cdot)=g_0(\cdot)$ and the function $w:[0,T]\times\R^n\!\to\R$ is a backward viscosity solution of \eqref{HJ1} satisfying $w(T,\cdot)=u(T,\cdot)$, then $u\geq w\geq v$.
\end{Th}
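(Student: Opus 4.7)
\medskip

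\noindent\textbf{Proof plan.} The \emph{only if} direction is immediate from Definition \ref{Reconstruction}: if $g_0$ is reconstructible, the backward viscosity solution $w$ appearing there itself satisfies $w(0,\cdot)=g_0$, so $v:=w$ does the job. The substantive part is the \emph{if} direction together with the sandwich $u\geq w\geq v$. The plan is to represent all three functions as value functions of Bolza / inverse Bolza problems associated to the Fenchel conjugate
\[
L(x,v):=\sup_{p\in\R^n}\bigl\{\,\langle p,v\rangle-H(x,p)\,\bigr\}.
\]
Assumption \eqref{A} renders $L(x,\cdot)$ convex, lower semicontinuous, with effective domain contained in the closed ball of radius $M(1+|x|)$; dual Fenchel then gives $H(x,p)=\sup_v\{\langle p,v\rangle-L(x,v)\}$. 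Standard control theory yields the Lax--Oleinik formulas
\[
u(t,x)=\inf\Bigl\{\,g_0(y(0))+\!\int_0^{t}\!L(y,\dot y)\,ds\,:\,y\ \text{AC},\ y(t)=x\Bigr\},
\]
\[
w(t,x)=\sup\Bigl\{\,u(T,z(T))-\!\int_t^{T}\!L(z,\dot z)\,ds\,:\,z\ \text{AC},\ z(t)=x\Bigr\},
\]
and the analogous sup--representation for $v$ relative to its own terminal datum $v(T,\cdot)$. The backward case reduces to the forward one via the involution of the Remark, the transformed Hamiltonian $(x,p)\mapsto H(x,-p)$ satisfying \eqref{A} as well.

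With these in hand, $u\geq w$ is pure dynamic programming: for any admissible $z$ on $[t,T]$ with $z(t)=x$, concatenating an almost-optimal curve for $u(t,x)$ on $[0,t]$ with $z$ on $[t,T]$ produces a competitor in the infimum defining $u(T,z(T))$, whence $u(T,z(T))-\int_t^T L(z,\dot z)\,ds\leq u(t,x)$; taking $\sup_z$ yields $w\leq u$. For $w\geq v$ the key step is to compare terminal data. Applying the sup--representation of $v$ at $(0,y(0))$ to the trajectory $z=y$, one gets, for every admissible $y$ with $y(T)=x$,
\[
v(T,x)\leq v(0,y(0))+\!\int_0^T\!L(y,\dot y)\,ds=g_0(y(0))+\!\int_0^T\!L(y,\dot y)\,ds,
\]
and taking the infimum over $y$ delivers $v(T,\cdot)\leq u(T,\cdot)$. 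Plugging this pointwise inequality into the sup--representation of $w$ and comparing termwise with that of $v$ gives $w\geq v$ throughout $[0,T]\times\R^n$. Since $u(0,\cdot)=g_0=v(0,\cdot)$, the chain $u\geq w\geq v$ forces $w(0,\cdot)=g_0$, which is the desired reconstructibility.

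The hardest part, I anticipate, is not this chain of inequalities but the justification of the Lax--Oleinik formulas and the associated semigroup identities under the lean hypothesis \eqref{A}, in which $L$ need not be superlinear. One must check that the class of admissible trajectories (those satisfying $|\dot y|\leq M(1+|y|)$ wherever $L$ is finite) is compact enough, in a Gronwall sense, for the infima and suprema to be attained; that the semigroup identities persist for merely \emph{continuous} initial or terminal data; and that the forward theory transfers to the backward setting through the involution $w(t,x)=-\widetilde u(T-t,x)$ uniformly in this generality. Once these standard but nontrivial ingredients are available, the theorem reduces to the sandwich argument above.
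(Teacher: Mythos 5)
Your argument is correct and coincides in substance with the paper's second proof (the optimal--control approach): you represent $u$, $w$, $v$ as value functions of the Bolza and inverse Bolza problems via Theorem \ref{OT}, derive $w\leq u$ and $v(T,\cdot)\leq u(T,\cdot)$ by concatenation/dynamic programming, deduce $v\leq w$ by monotonicity of the backward value operator, and squeeze $w(0,\cdot)=g_0$ at $t=0$; the paper merely packages these same inequalities as the abstract monotone-operator conditions \eqref{BF} and \eqref{FB} of Theorem \ref{GS}. The ``standard but nontrivial ingredients'' you flag (Lax--Oleinik representation, attainment, and the backward/forward involution under \eqref{A}) are exactly what the paper imports from \cite{B-CD} and \cite{AM2,AM,AM1,P-Q,P-Q-2} as Theorem \ref{OT}, so no gap remains.
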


The following example shows that the backward viscosity solution  $v$ in Theorem \ref{main} satisfying $g_0(\cdot)=v(0,\cdot)$ is usually not unique.
\begin{Ex}
 The function   $v_{\alpha}:[0,T]\times\R\to\R$ given by the formula
$$v_\alpha(t,x)=\alpha\,\min\{\,0,\,|x|-t\,\}$$ is a backward viscosity solution of \eqref{HJ2} satisfying $v_{\alpha}(0,\cdot)\equiv 0$ for all $\alpha\geq 0$. The initial function $g_0\equiv 0$ is reconstructible in time $T>0$ with $u\equiv w \equiv 0$.
\end{Ex}

We provide an example of an initial function $g_0$ that is reconstructible in time $T\in(0,1]$ and is not reconstructible in any time $T>1$.
\begin{Ex}
The function $u:[0,\infty)\times\R\to\R$ given by the formula
$$u(t,x)=\min\{\,0,-|x|+1-t\,\}$$
is a forward viscosity solution of \eqref{HJ2}. We set $g_T(x):=u(T,x)$ with $T>0$. The unique backward viscosity solution $w:[0,T]\times\R\to\R$ of \eqref{HJ2} satisfying the terminal condition $w(T,\cdot)=g_T(\cdot)$ is given by the formula
$$w(t,x)=\min\{\,0,1-T,-|x|+1-t\,\}.$$
We observe that $u\equiv w$ for all $T\in(0,1]$. Thus, the initial function $g_0(x)=\min\{0,-|x|+1\}$ is reconstructible in time $T\in(0,1]$. However, it is not reconstructible in any time $T>1$ because
$w(0,0)=1-T\neq 0=g_0(0)$.
\end{Ex}

The  question that arises for reconstructible initial functions is whether the forward viscosity solution $u$ equals to the backward viscosity solution $w$ on the whole domain $[0,T]\times\R^n$.
We succeeded to prove that $u=w$ for reconstructible initial function only in the case where the dimension $n=1$ and for Hamiltonians $H$ corresponding to a Mayer control problem (see Subsection \ref{s-cmp}). However, in Subsection \ref{s-dmp} we provide  an example of a discrete time Mayer problem in a finite state space such that the value functions corresponding to $u$ and $w$ are not equal. It does not mean that we have an example of $H$  satisfying the assumptions of Theorem \ref{main} and   an reconstructible initial function $g_0$ for which $u\neq w$. In Section \ref{pmr}, we provide two proofs of Theorem \ref{main}. The first one  bases on some classical properties of viscosity solutions. In the second one we use properties of the  value functions in corresponding to \eqref{HJ1} optimal control problems.  In both methods of the proof we abstract a common scheme called \it{ General setting} (see Section \ref{s-gs}) that we apply in some similar problems. In Subsection \ref{s-cbp}, we consider concave-convex Hamiltonians and we show that a forward viscosity solution corresponding to a convex initial function is a bilateral solution. It follows, that convex initial functions are reconstructible in every time $T>0$.

\section{Some preliminaries on viscosity solutions and optimal control}\label{prel}

\noindent Following \cite{B-CD}, we recall that a continuous function $v:[0,T]\times\R^n\to\R$ is a bilateral\linebreak viscosity subsolution of \eqref{HJ1} if for every $(t,x)\in(0,T)\times\R^n$ we have
\begin{equation*}
v_t+H(x,v_x)= 0\;\;\;\;\tn{for all}\;\;\;\;(v_t,v_x)\in\partial_{+}v(t,x).
\end{equation*}
A continuous function $v:[0,\!T]\times\R^n\to\R$ is a bilateral viscosity supersolution of \eqref{HJ1} if for every $(t,x)\in(0,T)\times\R^n$ we have
\begin{equation*}
v_t+H(x,v_x)= 0\;\;\;\;\tn{for all}\;\;\;\;(v_t,v_x)\in\partial_{-}v(t,x).
\end{equation*}
A continuous function $v:[0,T]\times\R^n\to\R$ is a bilateral viscosity solution of \eqref{HJ1} if it is a bilateral sub- and supersolution of \eqref{HJ1}. Obviously, a continuous function $v$ is a bilateral viscosity solution if and only if $v$ is forward and backward viscosity solution, i.e. conditions \eqref{fsub}, \eqref{fsuper}, \eqref{bsub}, \eqref{bsuper} are satisfied.

\vspace{4mm}
We say that a continuous function $u:[0,T]\times\R^n\to\R$ is a forward viscosity subsolution  [respectively, supersolution] of \eqref{HJ1} if for every $(t,x)\in(0,T)\times\R^n$ the condition \eqref{fsub} [respectively, \eqref{fsuper}] holds. In particular, $u$ is a forward viscosity solution of \eqref{HJ1}  if and only if  $u$ is a forward viscosity sub- and supersolution of \eqref{HJ1}.

\vspace{4mm}
We say that a continuous function $w\!:\![0,T]\times\R^n\!\!\to\!\R$ is a backward viscosity subsolution [respectively, supersolution] of \eqref{HJ1} if for every $(t,x)\in(0,T)\times\R^n$ the condition \eqref{bsub} [respectively, \eqref{bsuper}] holds. In particular, $w$ is a backward viscosity solution of \eqref{HJ1} if and only if $w$ is a backward viscosity sub- and supersolution of \eqref{HJ1}.

\vspace{4mm}
By Theorem 3.15 in \cite[Chap. 3]{B-CD} and Theorem 5.6 in \cite[Chap. 2]{B-CD} we obtain

\pagebreak
\begin{Th}\label{OT1}
Assume that $H:\R^n\times\R^n\to\R$ satisfies \eqref{A}. Then
\begin{enumerate}[leftmargin=11mm]
\item[\tn{\bf{(a)}}] if continuous functions $\bar{u},\tilde{u}:[0,T]\times\R^n\to\R$ are, respectively, a forward viscosity sub- and supersolution of \eqref{HJ1} and  $\bar{u}(0,\cdot)\leq \tilde{u}(0,\cdot)$, then $\bar{u}\leq\tilde{u}$\tn{;}
\item[\tn{\bf{(b)}}] if continuous functions $\bar{w},\tilde{w}\!:\![0,T]\times\R^n\!\!\to\!\R$ are, respectively, a backward viscosity sub- and supersolution of \eqref{HJ1} and  $\bar{w}(T,\cdot)\leq \tilde{w}(T,\cdot)$, then $\bar{w}\leq\tilde{w}$\tn{;}
\item[\tn{\bf{(c)}}]  a continuous function $u:[0,T]\times\R^n\to\R$ is a forward viscosity solution of \eqref{HJ1} if and only if $u$ is a bilateral viscosity supersolution and a continuous function $w:[0,T]\times\R^n\to\R$ is a backward viscosity solution of \eqref{HJ1} if and only if $w$ is a bilateral viscosity subsolution.
\end{enumerate}
\end{Th}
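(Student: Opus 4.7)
The plan is to treat Theorem \ref{OT1} as a consolidation of two classical ingredients from Bardi--Capuzzo-Dolcetta, combined with the symmetry transformation highlighted in the Remark, so the proof splits naturally into three pieces following the labelling (a), (b), (c).

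For part (a), I would simply invoke the standard comparison principle for continuous viscosity sub- and supersolutions of \eqref{HJ1} under assumption \eqref{A}; this is exactly Theorem 3.15 in \cite[Chap.~3]{B-CD}. The point is that the Kru\v{z}kov change of variable $u\mapsto -e^{-\lambda t}u$ together with the Lipschitz bounds on $H$ in both arguments provided by \eqref{A} yields the doubling-of-variables argument without further preparation.

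For part (b), the idea is to reduce to (a) by the time-reversal observed in the Remark. Setting $\bar u(t,x):=-\bar w(T-t,x)$ and $\tilde u(t,x):=-\tilde w(T-t,x)$, a direct computation of sub- and superdifferentials shows $(p_t,p_x)\in\partial_{\pm}\bar u(t,x)$ if and only if $(p_t,-p_x)\in\partial_{\mp}\bar w(T-t,x)$, and similarly for $\tilde u, \tilde w$. Hence $\bar u$ and $\tilde u$ are, respectively, a forward viscosity sub- and supersolution of the modified equation $\nabla_{\!t}u+\tilde H(x,\nabla_{\!x}u)=0$ with $\tilde H(x,p):=H(x,-p)$. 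Since $\tilde H$ still satisfies \eqref{A} (convexity, and Lipschitz constants are preserved under $p\mapsto-p$), (a) applies and gives $\bar u\leq\tilde u$, which unwinds to $\bar w\leq\tilde w$. The reversal of inequality of initial vs.\ terminal data is automatic from the minus sign.

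For part (c), the content that is not purely tautological is the claim that a forward viscosity solution actually satisfies $u_t+H(x,u_x)\leq0$ on the whole subdifferential $\partial_{-}u$, not only on the superdifferential $\partial_{+}u$. This is where the convexity of $p\mapsto H(x,p)$ enters; it is the classical result for convex Hamiltonians (Theorem 5.6 in \cite[Chap.~2]{B-CD}) asserting that for convex $H$ the subsolution inequality on $\partial_{+}u$ is equivalent to the subsolution inequality on $\partial_{-}u$. Granted this, the equivalence in (c) follows immediately: a forward viscosity solution satisfies both $\leq 0$ (from the subsolution property rewritten on $\partial_{-}u$) and $\geq 0$ (from the supersolution property) on $\partial_{-}u$, hence equality, i.e.\ is a bilateral supersolution; conversely, equality on $\partial_{-}u$ directly gives the forward supersolution inequality, while the forward subsolution inequality is recovered by applying the same convex-Hamiltonian equivalence in the opposite direction. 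The backward half of (c) is then reduced to the forward half via the same change of variables $u(t,x)=-w(T-t,x)$ used in (b), noting that the interchange $\partial_{\pm}u\leftrightarrow\partial_{\mp}w$ converts the bilateral supersolution condition for $u$ into the bilateral subsolution condition for $w$.

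I do not anticipate any genuine obstacle, as both (a)/(b) and the convex-Hamiltonian characterisation behind (c) are black-box results from \cite{B-CD}; the only care needed is in checking that the time-reversal symmetry really sends the sub/superdifferentials to each other with the expected sign convention, and that the transformed Hamiltonian still satisfies \eqref{A}.
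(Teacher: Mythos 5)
Your proposal follows essentially the same route as the paper, which gives no independent argument for Theorem \ref{OT1} but obtains it directly from Theorem 3.15 in \cite[Chap.~3]{B-CD} (the comparison principle, giving (a) and, after time reversal, (b)) and Theorem 5.6 in \cite[Chap.~2]{B-CD} (the bilateral characterisation for convex Hamiltonians, giving (c)), with the backward statements reduced to the forward ones via the substitution $u(t,x)=-w(T-t,x)$ noted in the Remark. One bookkeeping slip in your part (b) is worth fixing: since $(p_t,p_x)\in\partial_{\pm}u(t,x)$ corresponds to $(p_t,-p_x)\in\partial_{\mp}w(T-t,x)$, the substitution sends the backward \emph{sub}solution $\bar w$ to a forward \emph{super}solution $\bar u$ of the equation with $\tilde H(x,p):=H(x,-p)$, and the backward supersolution $\tilde w$ to a forward \emph{sub}solution $\tilde u$ --- the opposite of the labels you assign. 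Accordingly, the hypothesis $\bar w(T,\cdot)\leq\tilde w(T,\cdot)$ becomes $\tilde u(0,\cdot)\leq\bar u(0,\cdot)$, part (a) yields $\tilde u\leq\bar u$, and this unwinds to $\bar w\leq\tilde w$; as written, your labels and the direction of the intermediate inequality $\bar u\leq\tilde u$ are both reversed, two errors that happen to cancel in the final claim. Apart from this sign bookkeeping, the argument is sound and coincides with the paper's.
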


Assume that the Hamiltonian $H:\R^n\times\R^n\to \R$ satisfies \eqref{A} and $g_0,\,g_T\in C(\R^n,\R)$. There are connections between solutions of \eqref{HJ1} and optimal control problems given by a function dual to the Hamiltonian $H$. This function $L$, called the Lagrangian, is obtained from $H$ using the Legenndre-Fenchel transform:
\begin{equation}\label{tran1}
 L(x,v)= \sup_{p\in\R^{n}}\,\{\,\langle v,p\rangle-H(x,p)\,\}.
\end{equation}

For a given initial function $g_0$ and an initial pair $(t_0,x_0)$ the Bolza problem consists in
\begin{align*}
\mathrm{minimize}&\;\;\;g_0(x(0))+\int_{0}^{t_0}L(x(s),\dot{x}(s))\,\it{ds},\\[-1mm]
\mathrm{subject\;\, to}&\;\;\;x(\cdot)\in \mathcal{A}([0,t_0],\R^n)\;\;\tn{and}\;\;x(t_0)=x_0,
\end{align*}
where $\mathcal{A}([a,b],\R^n)$ denotes the space of all absolutely continuous functions from $[a,b]$ into $\R^n$. The forward value function $U:[0,T]\times\R^n\to\R$ corresponding to the Bolza problem is defined by the formula
\begin{equation}\label{fvf1}
U(t_0,x_0)= \inf_{\begin{array}{c}
\scriptstyle x(\cdot)\,\in\,\mathcal{A}([0,t_0],\R^n)\\[-1mm]
\scriptstyle x(t_0)=x_0
\end{array}}\,\Big\{\,g_0(x(0))+\int_{0}^{t_0}L(x(s),\dot{x}(s))\,\it{ds}\,\Big\}.
\end{equation}
We say that  an absolutely continuous function  $\bar{x}:[0,t_0]\to\R^n$ is an optimal trajectory on the time interval $[0,t_0]$ in the Bolza problem if
$U(t_0,\bar{x}(t_0))=U(0,\bar{x}(0))+\int_{0}^{t_0} L(\bar{x}(s),\dot{\bar{x}}(s))\it{ds}$. If $\bar{x}:[0,t_0]\to\R^n$ is the optimal trajectory in the Bolza problem, then we get
$U(t_2,\bar{x}(_2))=U(t_1,\bar{x}(t_1))+\int_{t_1}^{t_2} L(\bar{x}(s),\dot{\bar{x}}(s))\,\it{ds}$ for all $0\leq t_1<t_2\leq t_0$.
We say that an absolutely continuous function $\bar{x}:[0,T]\to\R^n$ is a maximal optimal trajectory in the Bolza problem if it is optimal on the time-interval $[0,T]$.

\vspace{1mm}
 For a given terminal function $g_T$ and a pair $(t_0,x_0)$ the inverse Bolza problem consists~in
\begin{align*}
\mathrm{maximize}&\;\;\;g_T(x(T))-\int_{t_0}^{T}L(x(s),\dot{x}(s))\,\it{ds},\\[-1mm]
\mathrm{subject\;\, to}&\;\;\;x(\cdot)\in \mathcal{A}([t_0,T],\R^n)\;\;\tn{and}\;\;x(t_0)=x_0.
\end{align*}

\vspace{1mm}
\noindent The backward value function $W\!\!:\![0,\!T]\!\times\!\R^n\!\to\!\R$ corresponding to inverse Bolza problem~is

\begin{equation}\label{fvf2}
W(t_0,x_0)= \sup_{\begin{array}{c}
\scriptstyle x(\cdot)\,\in\,\mathcal{A}([t_0,T],\R^n)\\[-1mm]
\scriptstyle x(t_0)=x_0
\end{array}}\,\Big\{\,g_T(x(T))-\int_{t_0}^{T}L(x(s),\dot{x}(s))\,\it{ds}\,\Big\}.
\end{equation}

We say that an absolutely continuous function $\tilde{x}:[t_0,T]\to\R^n$ is an optimal trajectory on the time interval $[t_0,T]$ in the inverse Bolza problem if
$W(t_0,\tilde{x}(t_0))=W(T,\tilde{x}(T))-\int_{t_0}^{T} L(\tilde{x}(s),\dot{\tilde{x}}(s))\,\it{ds}$. If $\tilde{x}\!:\![t_0,T]\to\R^n$ is the optimal trajectory in the inverse Bolza problem, then
$W(t_1,\tilde{x}(t_1))=W(t_2,\tilde{x}(t_2))-\int_{t_1}^{t_2} L(\tilde{x}(s),\dot{\tilde{x}}(s))\,\it{ds}$ for all $t_0\leq t_1<t_2\leq T$.
We say that an absolutely continuous function $\tilde{x}:[0,T]\to\R^n$ is a maximal optimal trajectory in the inverse Bolza problem if
it is optimal on the time-interval $[0,T]$.
By Theorem \ref{OT1}  and results concerning regularities of value functions from \cite{AM2,AM,AM1,P-Q,P-Q-2} we obtain the following:
\begin{Th}\label{OT}
Assume that $H:\R^n\times\R^n\to\R$ satisfies \eqref{A}  and $g_0(\cdot),\,g_T(\cdot)\in C(\R^n,\R)$. Let  $L$ be given by  \eqref{tran1}. Then we have the following.
\begin{enumerate}[leftmargin=11mm]
\item[\tn{\bf{(a)}}] The forward value function $U$ is the unique forward viscosity solution of \eqref{HJ1} satisfying $U(0,\cdot)=g_0(\cdot)$ and the backward value function $W$ is the unique backward viscosity solution of \eqref{HJ1} satisfying $W(T,\cdot)=g_T(\cdot)$\tn{;}
\item[\tn{\bf{(b)}}] For every $(t_0,x_0)\in[0,T]\times\R^n$ there exists an optimal trajectory $\bar{x}:[0,t_0]\to\R^n$ in the Bolza problem satisfying $\bar{x}(t_0)=x_0$\tn{;}
\item[\tn{\bf{(c)}}] For every $(t_0,x_0)\in[0,T]\times\R^n$ there exists an optimal trajectory $\tilde{x}:[t_0,T]\to\R^n$ in the inverse Bolza problem satisfying $\tilde{x}(t_0)=x_0$.
\end{enumerate}
 \end{Th}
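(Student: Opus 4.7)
The plan is to break Theorem \ref{OT} into two components: the characterization of $U$ and $W$ as the unique forward and backward viscosity solutions of \eqref{HJ1} in part (a), and the existence of optimal trajectories in parts (b) and (c). The main tools are the Legendre--Fenchel duality between $H$ and $L$, the dynamic programming principle for the Bolza and inverse Bolza value functions, the comparison principle already packaged in Theorem \ref{OT1}, and the direct method of the calculus of variations. All genuinely technical content is quoted from the references cited in the statement.

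First I would extract from assumption \eqref{A} the qualitative properties of $L$. Convexity of $H(x,\cdot)$ together with the Lipschitz estimate $|H(x,p)-H(x,q)|\leq M(1+|x|)\,|p-q|$ forces the effective domain of $L(x,\cdot)$ to lie inside the closed ball of radius $M(1+|x|)$, so $L(x,v)=+\infty$ whenever $|v|>M(1+|x|)$. Consequently, every absolutely continuous trajectory with finite Bolza cost satisfies $|\dot{x}(s)|\leq M(1+|x(s)|)$ a.e., and a Gr\"onwall argument then yields uniform a priori bounds on $x(\cdot)$ on any bounded time interval, which is the source of compactness in parts (b) and (c). The third line of \eqref{A} gives local Lipschitz continuity of $L(\cdot,v)$ on its effective domain.

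For part (a), I would first establish the dynamic programming principle for $U$: for every admissible trajectory $x(\cdot)$ with $x(t_2)$ prescribed,
\[
U(t_2,x(t_2))\leq U(t_1,x(t_1))+\int_{t_1}^{t_2}L(x(s),\dot{x}(s))\,ds,
\]
with equality along optimal trajectories. Feeding smooth test functions into this inequality along needle variations produces the forward sub- and supersolution conditions \eqref{fsub}--\eqref{fsuper} at every interior point. Continuity of $U$ on $[0,T]\times\R^n$ and the boundary identification $U(0,\cdot)=g_0(\cdot)$ are provided by the regularity results \cite{AM2,AM,AM1,P-Q,P-Q-2} for Bolza value functions with extended real valued Lagrangians arising from Hamiltonians satisfying \eqref{A}; uniqueness then follows at once from Theorem \ref{OT1}(a). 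The argument for $W$ is parallel, either by a direct reversed-time variant using Theorem \ref{OT1}(b), or by reduction to the forward case via $t\mapsto T-t$ with Hamiltonian $\widetilde{H}(x,p)=H(x,-p)$, which also satisfies \eqref{A}.

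For parts (b) and (c) I would apply the direct method. Fix $(t_0,x_0)$ and choose a minimizing sequence $x_n\in\mathcal{A}([0,t_0],\R^n)$ with $x_n(t_0)=x_0$ whose costs converge to $U(t_0,x_0)$. The velocity bound $|\dot{x}_n|\leq M(1+|x_n|)$ a.e., combined with Gr\"onwall, yields uniform boundedness and equi-absolute continuity of $\{x_n\}$, so Ascoli--Arzel\`a and the Dunford--Pettis criterion give, along a subsequence, $x_n\to\bar{x}$ uniformly and $\dot{x}_n\rightharpoonup\dot{\bar{x}}$ weakly in $L^1$. Since $L(x,\cdot)$ is convex and $L$ is lower semicontinuous on $\R^n\times\R^n$, the Tonelli--Serrin lower semicontinuity theorem delivers
\[
\int_0^{t_0}L(\bar{x}(s),\dot{\bar{x}}(s))\,ds\leq\liminf_{n\to\infty}\int_0^{t_0}L(x_n(s),\dot{x}_n(s))\,ds,
\]
and continuity of $g_0$ together with $x_n(0)\to\bar{x}(0)$ closes the estimate and forces $\bar{x}$ to be optimal. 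Part (c) is symmetric. The main obstacle I expect is the careful treatment of the extended real valued Lagrangian $L$ when establishing continuity of $U$ and $W$ up to the parabolic boundary and when passing to the limit in the cost functional; this is precisely the content of the cited regularity papers \cite{AM2,AM,AM1,P-Q,P-Q-2}, so it suffices to invoke them at the relevant junctures.
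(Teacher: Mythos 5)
The paper offers no proof of Theorem \ref{OT}: it is stated as an immediate consequence of Theorem \ref{OT1} together with the regularity results for value functions quoted from \cite{AM2,AM,AM1,P-Q,P-Q-2}. Your sketch --- the velocity bound and Gr\"onwall estimate extracted from \eqref{A}, dynamic programming plus the comparison principle of Theorem \ref{OT1} for part (a), and the direct method with Tonelli-type lower semicontinuity for parts (b) and (c), deferring the delicate boundary continuity of $U$ and $W$ to the same cited papers --- is precisely the standard argument behind those references, so it is correct and takes essentially the same route as the paper.
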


Suppose that the Hamiltonian $H:\R^n\times\R^n\to\R$ satisfies \eqref{A} and it is additionally\linebreak positively homogeneous, i.e. $H(x,\alpha p)=\alpha H(x,p)$ for $\alpha>0$. Then
\begin{equation*}
L(x,v)=\left\{\begin{array}{ccl}
0 & \tn{if} & v\in G(x)\\
+\infty & \tn{if} & v\notin G(x),
\end{array}\right.
\end{equation*}
where the set-valued map $G:\R^n\leadsto \R^n$ is given by
\begin{equation}\label{G}
G(x)=\{\,v\in\R^n\mid \langle v,p\rangle\leq H(x,p)\;\tn{for all}\; p\in\R^n\,\}.
\end{equation}
The set-valued map $G$ is nonempty-compact-convex-valued and Lipschitz continuous with respect to the Hausdorff distance; see \cite[Sect. 7]{HF} or \cite[Sect. 5.3]{F-P-Rz}. The optimal control problem related to the Hamiltonian $H$ being positively homogeneous is called a Mayer problem. The Mayer problem is a special case of the Bolza problem.  It can be formulate in the following way.

\vspace{2mm}
 We consider a differential inclusion
\begin{equation}\label{DI}
\dot{x}(t)\in G(x(t))\;\;\tn{a.e.}\;\,t\in[0,T].
\end{equation}
By $\mathrm{Sol}(G)$ we denote the set of absolutely continuous solutions $x:[0,T]\to\R^n$  of \eqref{DI}. By $\mathrm{Sol}_G(t_0,x_0)$ we denote the set of $x(\cdot)\in \mathrm{Sol}(G)$ that satisfy an initial condition $x(t_0)=x_0$, where $(t_0,x_0)\in[0,T]\times\R^n$. Every solution of the differential inclusion \eqref{DI} is extendable onto the whole interval $[0,T]$. The set $\mathrm{Sol}_G(t_0,x_0)$ is nonempty and compact in the space of continuous functions $C([0,T],\R^n)$ with the supremum norm. Moreover, the set-valued map $\mathrm{Sol}(G):[0,T]\times\R^n\leadsto C([0,T],\R^n)$ is Lipschitz continuous.

\vspace{2mm}
For a given function $g_0:\R^n\to\R$ and an initial pair $(t_0,x_0)$ the Mayer problem consists in finding an optimal trajectory $\bar{x}(\cdot)\in\mathrm{Sol}_G(t_0,x_0)$ such that
\begin{equation*}
g_0(\bar{x}(0))=\inf\{\,g_0(x(0))\mid x\in\mathrm{Sol}_G(t_0,x_0)\,\}.
\end{equation*}
We shall call $g_0$ the initial function in the Mayer problem.

For a given function $g_T:\R^n\to\R$ and an initial pair $(t_0,x_0)$ the inverse Mayer problem consists in finding an optimal trajectory $\tilde{x}(\cdot)\in\mathrm{Sol}_G(t_0,x_0)$ such that
\begin{equation*}
g_T(\tilde{x}(T))=\sup\{\,g_T(x(T))\mid x\in\mathrm{Sol}_G(t_0,x_0)\,\}.
\end{equation*}
We shall call $g_T$ the terminal function in the inverse Mayer problem.

\vspace{2mm}
The value function $U\!:\![0,T]\times\R^n\!\to\!\R$ corresponding to the Mayer problem is given by
\begin{equation}\label{ValueU}
U(t_0,x_0)=\inf\{\,g_0(x(0))\mid x\in\mathrm{Sol}_G(t_0,x_0)\,\}.
\end{equation}
If the initial  function $g_0$ is continuous, then the value function $U$ is continuous  and directly from the definition of  $U$ we obtain that
\begin{align}
&\forall\,x(\cdot)\in\mathrm{Sol}(G),\;\forall\,t_1<t_2,\;\; U(t_1,x(t_1))\geq U(t_2,x(t_2)),\label{U1}\\
&\forall\,(t_0,x_0),\;\exists\,\bar{x}(\cdot)\in\mathrm{Sol}_G(t_0,x_0),\;\forall\,t\in[0,t_0],\;\;U(t,\bar{x}(t))= U(t_0,x_0).\label{U2}
\end{align}

\vspace{2mm}
The value function $W:[0,T]\times\R^n\to\R$ corresponding to the inverse Mayer problem is given by the formula
\begin{equation}\label{ValueW}
W(t_0,x_0)=\sup\{\,g_T(x(T))\mid x\in\mathrm{Sol}_G(t_0,x_0)\,\}.
\end{equation}
If the terminal function $g_T$ is continuous, then the value function $W$ is continuous and
directly from the definition of $W$ we obtain that
\begin{align}
&\forall\,x(\cdot)\in \mathrm{Sol}(G),\;\forall\,t_1<t_2,\;\; W(t_1,x(t_1))\geq W(t_2,x(t_2)),\label{W1}\\
&\forall\,(t_0,x_0),\;\exists\,\tilde{x}(\cdot)\in\mathrm{Sol}_G(t_0,x_0),\;\forall\,t\in[t_0,T],\;\;W(t_0,x_0)= W(t,\tilde{x}(t)).\label{W2}
\end{align}

\section{General setting}\label{s-gs}

\noindent Let $X$, $Y$ be arbitrary sets and $A(X)$, $A(Y)$ be families of extended-real-valued  functions, i.e. $A(X)\subset\{g:X\to\R\cup\{\pm\infty\}\}$ and $A(Y)\subset\{g:Y\to\R\cup\{\pm\infty\}\}$.
We say that an operator $F:A(X)\to A(Y)$ is monotonic if
\begin{equation}\label{monotonic}
\forall\, g,g'\in A(X),\;\;\;g\leq g'\,\Rightarrow\, F(g)\leq F(g').
\end{equation}

\begin{Th}\label{GS}
Assume that the operators $F:A(X)\to A(Y)$ and $B:A(Y)\to A(X)$ are monotonic and  satisfy
\begin{align}
& \forall\, g_0\in A(X),\;\;\; B(F(g_0))\leq g_0,\label{BF}\\
& \forall\, g_1\in A(Y),\;\;\; F(B(g_1))\geq g_1.\label{FB}
\end{align}
Then we have
\begin{align}
& \forall\, g\in A(Y), \;\;\; B(F(B(g)))=B(g),\label{rekBF}\\
& \forall\, g\in A(X), \;\;\; F(B(F(g)))=F(g).\label{rekFB}
\end{align}
\end{Th}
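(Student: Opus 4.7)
The plan is to prove each of the two identities by establishing two inequalities, each of which follows from a single application of one of the hypotheses \eqref{BF}, \eqref{FB} combined with the monotonicity of one of the operators. No delicate argument appears to be needed: this is a purely order-theoretic fact of the same flavour as the standard Galois-connection closure identities, and the proof should take only a few lines.

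For \eqref{rekBF}, I would first specialize \eqref{BF} to the element $g_0:=B(g)\in A(X)$; this gives at once
\begin{equation*}
B(F(B(g)))\leq B(g).
\end{equation*}
For the reverse inequality, I would apply \eqref{FB} to $g_1:=g$ to get $F(B(g))\geq g$ in $A(Y)$, and then push this inequality through the monotonic operator $B$ using \eqref{monotonic}, obtaining $B(F(B(g)))\geq B(g)$. Combining the two inequalities yields the claimed equality.

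For \eqref{rekFB}, the argument is formally dual. Specializing \eqref{FB} to $g_1:=F(g)\in A(Y)$ gives $F(B(F(g)))\geq F(g)$; conversely, \eqref{BF} applied at $g_0:=g$ gives $B(F(g))\leq g$ in $A(X)$, and pushing this through the monotonic $F$ yields $F(B(F(g)))\leq F(g)$.

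There is essentially no obstacle here; the only point worth highlighting in the write-up is that \eqref{BF} and \eqref{FB} are being used at carefully chosen arguments (namely $B(g)$ and $F(g)$ themselves), so that the two hypotheses play perfectly symmetric roles in the two identities. The monotonicity of $F$ and $B$ is the only tool needed beyond these substitutions.
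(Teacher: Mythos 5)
Your proof is correct and follows exactly the paper's own argument: each identity is obtained by specializing one hypothesis at $B(g)$ (resp.\ $F(g)$) for one inequality, and pushing the other hypothesis through the monotone operator for the reverse inequality. Nothing to add.
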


\begin{proof}
By \eqref{BF}, we have $B(F(B(g)))\leq B(g)$. By \eqref{FB}, we have $F(B(g))\geq g$. Since the operator $B$ is monotonic, then $B(F(B(g)))\geq B(g)$, which follows \eqref{rekBF}.\\
By \eqref{FB}, we have $F(B(F(g)))\geq F(g)$. By \eqref{BF}, we have $B(F(g))\leq g$. Since the operator $F$ is monotonic, then $F(B(F(g)))\leq F(g)$, which follows \eqref{rekFB}.
\end{proof}

In the above framework it is reasonable to  say that an initial function $g_0\in A(X)$ is reconstructible if $g_0=B(F(g_0))$ [or equivalently $g_0=B(g_1)$ with $g_1=F(g_0)$]. The class of reconstructible initial condition is given by $B(A(Y))$.
Indeed, if the initial condition is given by $g_0:=B(g)$, for a $g\in A(Y)$, then  $g_0$ is reconstructible.  By \eqref{rekBF} in  Theorem \ref{GS},
\begin{equation}\label{rifgs}
g_0=B(g)=B(F(B(g)))=B(F(g_0)).
\end{equation}
The above scheme is illustrated below:

\begin{center}
\includegraphics[width=12cm,height=5.5cm]{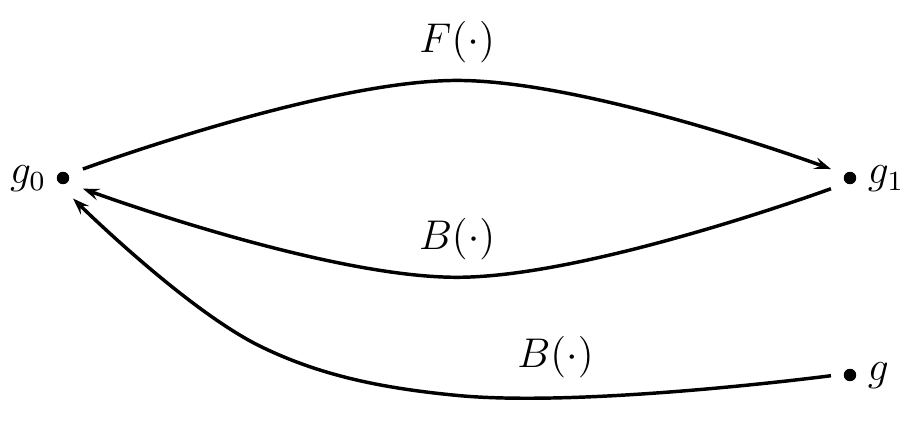}
\end{center} 

The operators $F$ and $B$ will be called forward and backward, respectively.
Below we provide an examples of the forward  and  backward operators  that satisfy the assumptions of Theorem \ref{GS}.

\begin{Ex}[Set-valued Mayer Problem]\label{SV}
Let $X,\,Y$ be arbitrary sets, $A(X),\,A(Y)$ be families of all bounded function. Suppose that a set-valued map $\varphi : X\leadsto Y$ is a surjection, i.e. for every $y\in Y$ there exists $x\in X$ such that $y\in\varphi(x)$. The inverse set-valued map $\varphi^{-1}:Y\leadsto X$ is given by $\varphi^{-1}(y)=\{x\in X\mid\,y\in \varphi(x)\}$. Obviously, $\varphi$ is a surjection if and only if $\varphi^{-1}$ has nonempty values.

The operators $F=F_\varphi:A(X)\to A(Y)$ and $B=B_\varphi:A(Y)\to A(X)$ given by
\begin{align*}
& F_\varphi(g_0)(y):=\inf\{\,g_0(x)\mid x\in\varphi^{-1}(y)\,\}\;\;\tn{for}\;\;g_0\in A(X),\; y\in Y,\\
& B_\varphi(g_1)(x):=\sup\{\,g_1(y)\mid y\in\varphi(x)\,\}\;\;\tn{for}\;\;g_1\in A(Y),\; x\in X
\end{align*}
are monotonic and satisfy \eqref{BF} and \eqref{FB}. Indeed, if $x\in\varphi^{-1}(y)$, then $F(g_0)(y)\leq g_0(x)$. Thus, $B(F(g_0))(x)=\sup\{\,F(g_0)(y)\mid y\in\varphi(x)\,\}\leq g_0(x)$, which follows \eqref{BF}. If $y\in \varphi(x)$, then $B(g_1)(x)\geq g_1(y)$. Therefore, $F(B(g_1))(y)=\inf\{\,B(g_1)(x)\mid y\in\varphi(x)\,\}\geq g_1(y)$, which follows \eqref{FB}.
\end{Ex}

\section{Two proofs of the main result}\label{pmr}
\noindent In the section we provide  the proof of Theorem \ref{main} by two methods. The first methods bases on the classical comparison result for viscosity super- and subsolutions and on the fact that  forward solutions are backward supersolution.
\begin{proof}[Proof of Theorem \ref{main} - viscosity solutions approach.]
We define the forward operator $F:C(\R^n,\R)\to C(\R^n,\R)$ and the backward operator $B:C(\R^n,\R)\to C(\R^n,\R)$  by
\begin{align*}
& F(g_0)=u(T,\cdot),\,\tn{where}\;u\;\tn{is a forward viscosity solution of}\;\eqref{HJ1}\;\tn{satisfying}\; u(0,\cdot)=g_0(\cdot),\\
& B(g_T)=w(0,\cdot),\,\tn{where}\;w\;\tn{is a backward viscosity solution of}\; \eqref{HJ1}\;\tn{satisfying}\; w(T,\cdot)=g_T.
\end{align*}
From (a) and (b) in Theorem \ref{OT1} it follows that the operators $F$ and $B$ are monotonic.

Assume that $v:[0,T]\times \R^n\to\R$ is a backward viscosity solution of \eqref{HJ1}. Moreover, let $u:[0,T]\times \R^n\to\R$ be a forward viscosity solution of \eqref{HJ1} satisfying  $v(0,\cdot)=u(0,\cdot)$ and $w:[0,T]\times \R^n\to\R$ be a backward viscosity solution of \eqref{HJ1} satisfying $w(T,\cdot)=u(T,\cdot)$.

By Theorem \ref{OT1} (c), $u$ is a bilateral viscosity supersolution of \eqref{HJ1}, in particular $u$ is also a backward viscosity supersolution of \eqref{HJ1}.  Since
$w$ and $u$ are, respectively, a backward viscosity sub- and supersolution of \eqref{HJ1} and $w(T,\cdot)=u(T,\cdot)$, by  Theorem \ref{OT1} (b), we have the inequality $w\leq u$. It follows \eqref{BF} in Theorem \ref{GS}.

 By Theorem \ref{OT1} (c), the backward viscosity solution  $v$ is also a bilateral viscosity subsolution, in particular $v$ is also a forward viscosity subsolution of \eqref{HJ1}. Since  $v$ and $u$ are, respectively, a forward viscosity sub- and supersolution of \eqref{HJ1} and $v(0,\cdot)= u(0,\cdot)$, by  Theorem \ref{OT1} (a), we have the inequality $v\leq u$. It follows \eqref{FB} in Theorem \ref{GS}.

Thus, the assumptions of Theorem \ref{GS} are satisfied for $F$ and $B$.  By  \eqref{rifgs}, we have $g_0=B(F(g_0))$ for $g_0\!=\!B(g)$ and $g(\cdot)\!=\!v(T,\cdot)$.  It means that $g_0$ is reconstructible in time $T$.

Since $v(T,\cdot)\leq u(T,\cdot)=w(T,\cdot)$ and $v$ and $w$ are, respectively, a backward viscosity sub- and supersolution of \eqref{HJ1}, by  Theorem \ref{OT1} (b), we obtain the inequality $v\leq w$.
\end{proof}

\begin{proof}[Proof of Theorem \ref{main} - optimal control approach.]
By Theorem \ref{OT}, the solutions $v$, $u$, $w$ can be represented as value functions $V$, $U$, $W$ of a corresponding optimal control problem. Let $V=v$ be the value function in the inverse Bolza problem with the terminal function $g(\cdot)=v(T,\cdot)$. Let $U=u$ be the value function in the Bolza problem with the initial function $g_0(\cdot)=V(0,\cdot)$. Let $W=w$ be the value function in the inverse Bolza problem with the terminal function $g_T(\cdot)=U(T,\cdot)$.

Let $t\in[0,T]$. We define monotonic operators $F_t,B_t:C(\R^n,\R)\to C(\R^n,\R)$ by
\begin{align*}
& F_t(g_0)(y_0)= \inf_{\begin{array}{c}
\scriptstyle x(\cdot)\,\in\,\mathcal{A}([0,T],\R^n)\\[-1mm]
\scriptstyle x(t)=y_0
\end{array}}\,\Big\{\,g_0(x(0))+\int_{0}^{t}L(x(s),\dot{x}(s))\,\it{ds}\,\Big\},\\
& B_t(g_T)(x_0)= \sup_{\begin{array}{c}
\scriptstyle x(\cdot)\,\in\,\mathcal{A}([0,T],\R^n)\\[-1mm]
\scriptstyle x(T-t)=x_0
\end{array}}\,\Big\{\,g_T(x(T))-\int_{T-t}^{T}L(x(s),\dot{x}(s))\,\it{ds}\,\Big\}.
\end{align*}
In view of the Change of Variables Theorem we have
\begin{equation*}
B_t(g_T)(x_0)= \sup_{\begin{array}{c}
\scriptstyle x(\cdot)\,\in\,\mathcal{A}([0,T],\R^n)\\[-1mm]
\scriptstyle x(0)=x_0
\end{array}}\,\Big\{\,g_T(x(t))-\int_{0}^{t}L(x(s),\dot{x}(s))\,\it{ds}\,\Big\}.
\end{equation*}
To show that $B_t(F_t(g_0))\!\leq\! g_0$ we fix $x_0\!\in\!\R^n$. For arbitrary $x(\cdot)\!\in\!\mathcal{A}([0,T],\R^n)$ with $x(0)=x_0$
\begin{equation*}
F_t(g_0)(x(t))\;\;\leq\;\; g_0(x_0)+\int_{0}^{t}L(x(s),\dot{x}(s))\,\it{ds}.
\end{equation*}

\noindent Thus
\begin{equation*}
\sup_{\begin{array}{c}
\scriptstyle x(\cdot)\,\in\,\mathcal{A}([0,T],\R^n)\\[-1mm]
\scriptstyle x(0)=x_0
\end{array}}\,\Big\{\,F_t(g_0)(x(t))-\int_{0}^{t}L(x(s),\dot{x}(s))\,\it{ds}\,\Big\}\;\;\leq\;\; g_0(x_0).
\end{equation*}
So, the assumption \eqref{BF} is satisfied for $F=F_t$, $B=B_t$.

\noindent To show that $F_t(B_t(g_T))\!\geq\! g_T$ we fix $y_0\in\R^n$. For arbitrary $x(\cdot)\!\in\!\mathcal{A}([0,T],\R^n)$ with $x(t)=y_0$
\begin{equation*}
B_t(g_T)(x(0))\;\;\geq\;\;g_T(y_0)-\int_0^t L(x(s),\dot{x}(s))ds.
\end{equation*}
Thus
\begin{equation*}
\inf_{\begin{array}{c}
\scriptstyle x(\cdot)\,\in\,\mathcal{A}([0,T],\R^n)\\[-1mm]
\scriptstyle x(t)=y_0
\end{array}}\,\Big\{\,B_t(g_T)(x(0))+\int_{0}^{t}L(x(s),\dot{x}(s))\,\it{ds}\,\Big\}\;\;\geq\;\;g_T(y_0).
\end{equation*}
So, the assumption \eqref{FB} is satisfied for $F=F_t$, $B=B_t$.

We have $B_t(g)=V(T-t,\cdot)$, $F_t(g_0)=U(t,\cdot)$ and $B_t(g_T)=W(T-t,\cdot)$.
The assumptions of Theorem \ref{GS} are satisfied for $F=F_T$ and $B=B_T$.  By  \eqref{rifgs}, we have $g_0=B_T(F_T(g_0))$ for $g_0=B_T(g)$.  It means that $g_0$ is reconstructible in time $T$.

To show that $V\leq W$ observe that, by \eqref{FB}, we have $g_T=F_T(B_T(g))\geq g$. So we get
$$V(t_0,x_0)=B_{T-t_0}(g)(x_0)\leq B_{T-t_0}(g_T)(x_0)=W(t_0,x_0).$$
By the Dynamic Programming Principle for the Bolza problem, we have $F_{t_1+t_2}=F_{t_1}\circ F_{t_2}$ for $0\leq t_2\leq t_1+t_2\leq T$. Therefore we get
$$W(t,\cdot)=B_{T-t}(g_T)=B_{T-t}(F_T(g_0))=B_{T-t}(F_{T-t}(F_t(g_0)))=B_{T-t}(F_{T-t}(U(t,\cdot))).$$
By \eqref{BF}, we have $B_{T-t}(F_{T-t}(U(t,\cdot)))\leq U(t,\cdot)$. Therefore $W\leq U$.
\end{proof}

\section{Does the forward solution equal to the backward one?}

\noindent In Theorem \ref{main} we describe the class of reconstructible initial condition. To a reconstructible in time $T>0$ initial function $g_0$  it corresponds a forward viscosity solutions $u$ of \eqref{HJ1} satisfying $u(0,\cdot)=g_0(\cdot)$ and a backward viscosity solution $w$ of \eqref{HJ1} satisfying $w(T,\cdot)=u(T,\cdot)$. In the section we ask whether the equality $u=w$ holds true on the whole domain $[0,T]\times\R^n$. From the definition it holds true on the set $\{0,T\}\times\R^n$. The problem can be reformulated to the language of value functions. In the section we consider three particular cases when we are able to answer the question asked in the title of the section.

\vspace{2mm}
 In Subsection \ref{s-dmp} we consider a Mayer type control problem with a finite state space $X$ and a discrete time dynamics given be a set-valued map $\phi:X\leadsto X$. We adopt our general scheme to the case and describe the set of reconstructible initial functions. We provide an example of a reconstructible initial function for which the value function $U$ in the  Mayer problem is \bf{not} equal to the value function $W$ in the inverse Mayer problem.  This surprising example motivated us to present in the paper the discrete time model.

\vspace{2mm}
In Subsection \ref{s-cmp} we show that in one dimensional case ($n=1$) and for Hamiltonians corresponding to a Mayer problem the forward viscosity solution corresponding to a recontructible initial function is a bilateral solution, i.e. $u=w$.

\vspace{2mm}
In Subsection \ref{s-cbp}, we consider concave-convex Hamiltonians. We show that forward viscosity solutions corresponding to a convex initial functions are bilateral ones. It means that for such Hamiltonians every convex initial function is reconstructible in every time $T>0$. In the proof we use Pontriagin Maximum Principle in a version obtained in\cite{R-W-1}.

\subsection{Discrete time Mayer Problem}\label{s-dmp}
 We suppose that the state space is finite $X=\{1,\,2,\,\ldots, n\}$ and a set-valued map $\phi:X\leadsto X$ is a surjection. Fix $T\in\N$ and denote $I=\{0,\,1\,\ldots,\,T\}$. We say that $x:I\to X$ is a trajectory if $x(i+1)\in\phi(x(i))$ for $i=0,\,1,\ldots, T-1$. We denote by $\mathrm{Sol}(\phi)$ the set of all trajectories. Let $\mathrm{S}_\phi(k,x_k)=\{x\in\mathrm{Sol}(\phi)\mid x(k)=x_k\}$, where $(k,x_k)\in I\times X$. Fix functions $g_0,\,g_T:X\to\R$.  The  value function $U:I\times X\to \R$ in the discrete time Mayer problem and the value function $W:I\times X\to\R$ in the inverse Mayer problem are given by
\begin{align}
&U(k,x_k)=\min\{\,g_0(x(0))\,\mid x\in\mathrm{S}_\phi(k,x_k)\,\},\label{Value U d}\\
&W(k,x_k)=\max\{\,g_T(x(T))\,\mid x\in\mathrm{S}_\phi(k,x_k)\,\}.\label{Value W d}
\end{align}
The functions $U$, $W$ are forward non-increasing along any trajectory (comp. \eqref{U1}, \eqref{W1}). The function $U$ is backward in time constant along the optimal trajectory and the function $W$ is forward in time constant along the optimal trajectory (comp. \eqref{U2}, \eqref{W2}).

\vspace{2mm}
A function $g:X\to \R$ can be treated as a sequence $(g(1),g(2),\ldots,g(n))\in\R^n$. Let $G=\{\,g:X\to\R\,\}$. We define a forward one step operator $F_1:G\to G$ and a backward one step operator $B_1:G\to G$  by
\begin{align*}
&F_1(g)(j)=\min\{\,g(i)\,\mid\,j\in\phi(i)\,\},\\
&B_1(g)(i)=\max\{\,g(j)\,\mid\,j\in\phi(i)\,\}.
\end{align*}

\vspace{2mm}
Suppose that $U$ and $W$ be given by \eqref{Value U d} and \eqref{Value W d}, respectively. In view of the Dynamic Programming Principle we obtain that
\begin{align}
&U(k,\cdot)=F_1^k(g_0)\;\;\tn{for}\;\; k=1,\ldots,T,\label{DPP U}\\
&W(k,\cdot)=B_1^{T-k}(g_T)\;\;\tn{for}\;\; k=0,\ldots,T-1.\label{DPP W}
\end{align}

\vspace{2mm}
The reconstruction result for discrete dynamics can be formulated in the following way.

\begin{Prop}\label{reconstruction d}
Let $g\in G$ and $g_0=B_1^T(g)$. Suppose that $U:I\times X\to\R$ is the forward value function given by \eqref{Value U d}. Set $g_T=U(T,\cdot)$. Suppose that  $W:I\times X\to\R$ is the backward value function given by \eqref{Value W d}. Then $U(0,\cdot)=W(0,\cdot)$.
\end{Prop}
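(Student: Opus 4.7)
The proposition fits cleanly into the abstract framework of Section~\ref{s-gs}; the plan is to identify the right forward and backward operators and then invoke Theorem~\ref{GS}. I would set $F:=F_1^T$ and $B:=B_1^T$. The one-step operators $F_1,B_1$ are precisely the operators $F_\varphi,B_\varphi$ of Example~\ref{SV} applied to $\varphi=\phi$, so they are monotonic and satisfy \eqref{BF} and \eqref{FB}. A straightforward induction on $T$ then shows that $F_1^T=F_{\phi^T}$ and $B_1^T=B_{\phi^T}$, where $\phi^T:X\leadsto X$ is the $T$-fold set-valued composition of $\phi$ with itself; here one uses that $j\in\phi^T(i)$ iff there is a trajectory of length $T$ from $i$ to $j$, so the nested $\min$'s (resp.\ $\max$'s) collapse into a single $\min$ (resp.\ $\max$) over $\phi^T$. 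Since $\phi$ is a surjection with nonempty values, $\phi^T$ is a surjection as well, and Example~\ref{SV} applied to $\varphi=\phi^T$ shows that $F$ and $B$ satisfy the hypotheses of Theorem~\ref{GS}.

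With those hypotheses in place, the identity \eqref{rekBF} gives
$$B(F(B(g)))=B(g).$$
By assumption $g_0=B_1^T(g)=B(g)$, so this reads $B_1^T(F_1^T(g_0))=g_0$, which is exactly the reconstruction identity \eqref{rifgs} specialised to the present setting.

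It remains to translate back to value functions using the Dynamic Programming Principle \eqref{DPP U}, \eqref{DPP W}. From \eqref{DPP U} and the definition of $g_T$ one has $g_T=U(T,\cdot)=F_1^T(g_0)$, hence
$$W(0,\cdot)=B_1^T(g_T)=B_1^T(F_1^T(g_0))=g_0=U(0,\cdot),$$
the last equality being immediate from \eqref{Value U d} at $k=0$, since the only admissible trajectory must satisfy $x(0)=x_0$. The argument is essentially mechanical once the identification with Example~\ref{SV} is made; the only step that requires some thought is the inductive verification of $F_1^T=F_{\phi^T}$ and $B_1^T=B_{\phi^T}$, but this does not pose any real obstacle.
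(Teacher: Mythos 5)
Your proposal is correct and follows essentially the same route as the paper: identify $F_1^T$ and $B_1^T$ with the operators $F_\varphi$, $B_\varphi$ of Example~\ref{SV} for $\varphi=\phi^T$, apply Theorem~\ref{GS} via \eqref{rifgs}, and translate back through the Dynamic Programming Principle \eqref{DPP U}, \eqref{DPP W}. The only difference is that you spell out the inductive verification of $F_1^T=F_{\phi^T}$, $B_1^T=B_{\phi^T}$ and the surjectivity of $\phi^T$, which the paper states as an observation.
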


\begin{proof}
We define the forward operator $F=F_1^T$ and the backward operator $B=B_1^T$. The set-valued map $\varphi:X\leadsto X$ is given by $\varphi(x_0)=\phi^T(x_0)=\{\,x_T\mid x\in\mathrm{Sol(}\phi),\;x(0)=x_0\,\}$. We observe that $F=F_\varphi$ and $B=B_\varphi$, where $F_\varphi,\,B_\varphi$ are given in Example \ref{SV}. So, the operators $F,\,B$ satisfy the assumptions of Theorem \ref{GS}. Since $g_0=B_1^T(g)$ and $g_T=U(T,\cdot)=F_1^T(g_0)$, we deduce from \eqref{rifgs} that $U(0,\cdot)=g_0=B_1^T(g_T)=W(0,\cdot)$.
\end{proof}

\vspace{-6mm}
\pagebreak
Below we provide an example of a discrete Mayer problem  such that $U\neq W$.

\begin{Ex}\label{exbr}
Let $X=\{1,2,3\}$. To describe a set-valued map $\phi:X\leadsto X$ we use a zero-one matrix $A(3\times3)$ such that $j\in\phi(i)\Leftrightarrow a_{ij}=1$.
\begin{equation*}
\left[\begin{array}{ccc}
     0  &   1  &   0  \\
     1  &   0  &   1  \\
     1  &   0  &   0
\end{array}\right]
\end{equation*}
Thus $\phi(1)=\{2\},\,\;\phi(2)=\{1,\;3\},\;\phi(3)=\{1\}$.
We set $T=3$, $I=\{0,\;1,\;2,\;3\}$ and $g=(1,\,   2,\,     3)$.
The backward value function $V:I\times X\to\R$ corresponding to the terminal function $g$ is given by
\begin{equation*}
\begin{array}{lccc}
V(3,\cdot)=&
     (\;1 & 2 & 3\;)\\
V(2,\cdot)=&
     (\;2& 3 & 1\;)\\
V(1,\cdot)=&
     (\;3 & 2 & 2\;)\\
V(0,\cdot)=&
    (\;2 & 3 & 3\;)
\end{array}
\end{equation*}
The function $g_0=(2,\;3,\;3)$ is reconstructible in time $T=3$. The forward value function $U:I\times X\to\R$ corresponding to the initial function $g_0$ is given by
\begin{equation*}
\begin{array}{lccc}
U(0,\cdot)=&
     (\;2 & 3 & 3\;)\\
U(1,\cdot)=&
     (\;3 & 2 & 3\;)\\
U(2,\cdot)=&
     (\;2 & 3 & 2\;)\\
U(3,\cdot)=&
    (\;2 & 2 & 3\;)
\end{array}
\end{equation*}
We set $g_3=(2,\;2,\;3)$. The backward value function $W$ corresponding to the terminal function $g_3$ is given by
\begin{equation*}
\begin{array}{lccc}
W(3,\cdot)=&
     (\;2 & 2 & 3\;)\\
W(2,\cdot)=&
     (\;2 & 3 & 2\;)\\
W(1,\cdot)=&
     (\;3 & 2 & 2\;)\\
W(0,\cdot)=&
    (\;2 & 3 & 3\;)
\end{array}
\end{equation*}
Obviously, we have  $U(0,\cdot)=W(0,\cdot)$ and $U(3,\cdot)=W(3,\cdot)$. Nevertheless, $U(1,\;3)\neq W(1,\;3)$.
\end{Ex}

\subsection{Continuous time Mayer Problem}\label{s-cmp}
In the section we show that a forward viscosity solution corresponding to a reconstructible initial function is a bilateral viscosity solution for Hamiltonians that are positively homogeneous with respect to the second variable in dimension $n=1$. It means that for a continuous time Mayer problem in one dimensional state space a situation described in Example \ref{exbr} is not possible. 

\vspace{2mm}
We consider the Mayer control problem given by a differential inclusion \eqref{DI}, where the right hand side $G$ is given by \eqref{G} and the Hamiltonian $H$ satisfies \eqref{A} and is positively homogeneous with respect to the second variable. Alternatively we can start formulating the problem from a set-valued map $G:\R^n\leadsto \R^n$. If $G$ is a Lipschitz continuous map with nonempty, compact, convex values, then the Hamiltonian $H:\R^n\times\R^n\to\R$ given by
$$H_G(x,p)=\sup\{\,\langle v,p\rangle\mid v\in G(x)\,\}$$
satisfies \eqref{A} and is positively homogeneous with respect to the second variable.

\pagebreak
Define the set-valued map $\varphi_G:\R^n\leadsto\R^n$ by the formula
$$\varphi_G(x_0)=\{\,x(T)\mid x\in\mathrm{Sol}_G(0,x_0)\,\}.$$
The set-valued map $\varphi_G$ is a Lipschitz  continuous  surjection with nonempty and compact values. The inverse set-valued map $\varphi_G^{-1}$ is given by
$$\varphi_G^{-1}(x_0)=\{\,x(0)\mid x\in\mathrm{Sol}_G(T,\,x_0)\,\}.$$
So, $\varphi_G^{-1}$ is a Lipschitz continuous map with nonempty and compact values.

\vspace{2mm}
We set  in the scheme presented in Section \ref{s-gs}: $X=Y=\R^n$ and $A(\R^n)$ is the space of continuous functions $C(\R^n,\,\R)$.  The operators $F_{\varphi_G}$, $B_{\varphi_G}$ defined in the Example \ref{SV} transform continuous functions into continuous functions. By the same arguments as in Example \ref{SV} the operators  $F_{\varphi_G}$, $B_{\varphi_G}$ satisfy the assumptions of Theorem \ref{GS}.

\vspace{2mm}
Fix $g\in C(\R^n,\R)$. Let  $V$ be the value function in the  inverse Mayer problem with terminal function $g$. Then we obtain that $V(0,\cdot)=B_{\varphi_G}(g)$. Let $U$ be the value function corresponding to the Mayer problem with  an initial function $g_0(\cdot):=V(0,\cdot)$. Then we obtain that $U(T,\cdot)=F_{\varphi_G}(g_0)$.  Let $W$ be the value function corresponding to the inverse Mayer problem and $W(T,\cdot)=U(T,\cdot)$. If $g_T:=W(T,\cdot)=U(T,\cdot)$, then  by  \eqref{rifgs} we have $g_0=B_{\varphi_G}(g_T)=W(0,\cdot)$.

\vspace{2mm}
As a corollary of Theorem \ref{GS} and the above consideration we obtain:

\begin{Cor}
Assume that $G:\R^n\leadsto\R^n$ is a Lipschitz continuous map with nonempty, convex, compact values. Then an initial function $g_0\in C(\R^n,\,\R)$ is reconstructible if and only if there exists  a  function $g\in C(\R^n,\,\R)$ such that $g_0=V(0,\cdot)$ and $V$ is the value function corresponding to the inverse Mayer problem with the terminal function $g$.
\end{Cor}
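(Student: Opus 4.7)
The plan is to recognize that the statement is essentially the Mayer-problem reformulation of Theorem \ref{main}, once we observe that the Hamiltonian $H_G(x,p)=\sup\{\langle v,p\rangle\mid v\in G(x)\}$ associated with $G$ satisfies \eqref{A} and is positively homogeneous with respect to $p$ (this is the reverse of the Mayer/\eqref{G} correspondence recorded in the excerpt). By Theorem \ref{OT}(a), for a given $g\in C(\R^n,\R)$ the unique backward viscosity solution of \eqref{HJ1} with terminal data $g$ is exactly the value function $V$ of the inverse Mayer problem, and analogously for the forward direction.

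For the ``if'' direction I would take any $g\in C(\R^n,\R)$ and let $V$ be the value function of the inverse Mayer problem with terminal $g$. Then $V$ is a backward viscosity solution of \eqref{HJ1} defined on $[0,T]\times\R^n$ with $V(0,\cdot)=g_0$, so Theorem \ref{main} applies directly and yields that $g_0$ is reconstructible in time $T$.

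For the ``only if'' direction I would, given a reconstructible $g_0$, let $u$ and $w$ be the forward and backward viscosity solutions furnished by Definition \ref{Reconstruction}, and set $g:=w(T,\cdot)=u(T,\cdot)$. By Theorem \ref{OT}(a), $w$ coincides with the value function $V$ of the inverse Mayer problem with terminal function $g$. Hence $g_0=w(0,\cdot)=V(0,\cdot)$, producing the required $g$.

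There is no substantive obstacle here: the Corollary is a repackaging of Theorem \ref{main} into the Mayer terminology, with Lipschitz continuity and compact-convex-valuedness of $G$ providing the hypotheses \eqref{A} for $H_G$ and guaranteeing (through the Lipschitz continuity of $\varphi_G$ and $\varphi_G^{-1}$ with compact values) that value functions are continuous. As an alternative route that does not route through Theorem \ref{main}, one could argue entirely at the level of Theorem \ref{GS} with the operators $F_{\varphi_G}$, $B_{\varphi_G}$ of Example \ref{SV}, using the identifications $F_{\varphi_G}(g_0)=U(T,\cdot)$ and $B_{\varphi_G}(g)=V(0,\cdot)=W(0,\cdot)$ for $g=U(T,\cdot)$, and invoking the reconstruction identity \eqref{rifgs}; this is precisely the construction sketched in the paragraph preceding the Corollary.
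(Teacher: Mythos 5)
Your argument is correct, and your primary route is genuinely different from the paper's. The paper proves this Corollary without ever mentioning viscosity solutions in this subsection: it works entirely at the level of the abstract reconstruction scheme of Theorem \ref{GS}, instantiated with the set-valued-Mayer operators $F_{\varphi_G}$, $B_{\varphi_G}$ of Example \ref{SV} (with $\varphi_G(x_0)=\{x(T)\mid x\in\mathrm{Sol}_G(0,x_0)\}$), together with the identifications $B_{\varphi_G}(g)=V(0,\cdot)$, $F_{\varphi_G}(g_0)=U(T,\cdot)$, $B_{\varphi_G}(g_T)=W(0,\cdot)$ and the identity \eqref{rifgs}; this is exactly the ``alternative route'' you sketch in your last sentence, and it is the more elementary one, since it needs only the monotonicity and the inequalities \eqref{BF}--\eqref{FB}, which follow directly from the definitions of the value functions, with no appeal to comparison theorems for viscosity solutions. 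Your main route instead passes through Theorem \ref{main} and Theorem \ref{OT}(a): you use that $H_G$ satisfies \eqref{A} and is positively homogeneous, so that backward viscosity solutions with continuous terminal data are precisely the inverse Mayer value functions (the Lagrangian being the indicator of $G(x)$, the inverse Bolza problem collapses to the inverse Mayer problem), and then the equivalence in Theorem \ref{main} translates verbatim. This is valid --- the paper establishes in this same subsection that $H_G$ satisfies \eqref{A} under the stated hypotheses on $G$ --- and it buys the extra conclusion $u\geq w\geq v$ from Theorem \ref{main} for free, at the cost of invoking the heavier PDE machinery (uniqueness and comparison from Theorem \ref{OT1}) where the paper's proof needs none of it. Both directions of your equivalence are complete; in particular your ``only if'' direction correctly reads the required $g$ off the terminal data $w(T,\cdot)=u(T,\cdot)$ and uses uniqueness in Theorem \ref{OT}(a) to identify $w$ with the value function $V$.
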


It appears a natural question whether $U=W$ on $(0,T)\times\R^n$ if $U=W$ on $\{0,T\}\times \R^n$.  This question correlates well with Proposition 4.4 in \cite{C-F}. We succeed in proving that $U=W$ only in a very special case: $n=1$ and  $H$ corresponding to a Mayer problem.

\begin{Th}\label{UV}
Assume that $G:\R^1\leadsto\R^1$ is a Lipschitz continuous map with nonempty, convex, closed values and functions $U,\,W:[0,T]\times\R^1\to\R$ are continuous and satisfy \eqref{U1}-\eqref{U2} and \eqref{W1}-\eqref{W2}, respectively. If $U(0,\cdot)=W(0,\cdot)$ and $U(T,\cdot)=W(T,\cdot)$, then\linebreak we obtain $U=W$.
\end{Th}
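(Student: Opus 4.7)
The plan is to argue by contradiction. Assume that $U(t^*,x^*)>W(t^*,x^*)$ for some $(t^*,x^*)\in(0,T)\times\R$. I will exploit the optimality conditions \eqref{U2} and \eqref{W2} together with the boundary identities $U(0,\cdot)=W(0,\cdot)$ and $U(T,\cdot)=W(T,\cdot)$ to produce two auxiliary trajectories along which both $U$ and $W$ are constant, and then use the one-dimensional structure of the attainable sets to concatenate these into a single trajectory through $(t^*,x^*)$ contradicting the assumption.

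By \eqref{U2} there exists a U-optimal trajectory $\bar x$ with $\bar x(t^*)=x^*$ and $U(t^*,x^*)=U(0,\bar x(0))=g_0(\bar x(0))$. The hypothesis $U(0,\cdot)=W(0,\cdot)$ then yields $W(0,\bar x(0))=U(t^*,x^*)$, and \eqref{W2} applied at $(0,\bar x(0))$ furnishes a trajectory $\hat\xi$ starting at $\bar x(0)$ with $W(t,\hat\xi(t))\equiv U(t^*,x^*)$. The monotonicity \eqref{U1}, combined with the endpoint identities $U(0,\hat\xi(0))=g_0(\bar x(0))=U(t^*,x^*)$ and $U(T,\hat\xi(T))=W(T,\hat\xi(T))=U(t^*,x^*)$, forces $U(t,\hat\xi(t))\equiv U(t^*,x^*)$ as well. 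In particular $\hat\xi(t^*)\neq x^*$, since otherwise $W(t^*,x^*)=U(t^*,x^*)$. An entirely symmetric argument, starting from the W-optimal $\tilde x$ through $(t^*,x^*)$ and using $U(T,\cdot)=W(T,\cdot)$ together with \eqref{U2}, produces a trajectory $\tilde\eta$ along which $U\equiv W\equiv W(t^*,x^*)$ and with $\tilde\eta(t^*)\neq x^*$.

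The hypothesis $n=1$ now enters crucially. For a Lipschitz map $G:\R\leadsto\R$ with nonempty convex compact values, every forward and every backward attainable set is a closed interval. Thus the set $\{z(t^*):z\in\mathrm{Sol}_G,\,z(0)=\bar x(0)\}$ is an interval containing both $\hat\xi(t^*)$ and $x^*$, and analogously for the trajectories ending at $(T,\tilde x(T))$. Consequently, for every $v$ lying between $\hat\xi(t^*)$ and $x^*$ there is a trajectory from $(0,\bar x(0))$ passing through $(t^*,v)$, and similarly between $\tilde\eta(t^*)$ and $x^*$. Performing a case analysis on the relative positions of $\hat\xi(t^*)$, $\tilde\eta(t^*)$ and $x^*$, I would concatenate appropriate forward and backward pieces through $(t^*,x^*)$ to obtain a trajectory $\xi$ with $g_0(\xi(0))\leq U(t^*,x^*)$ and $g_T(\xi(T))\geq U(t^*,x^*)$; the $\max$ defining $W(t^*,x^*)$ then forces $W(t^*,x^*)\geq U(t^*,x^*)$, contradicting the assumption. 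The hardest part will be the concatenation when $\hat\xi(t^*)$ and $\tilde\eta(t^*)$ lie on the same side of $x^*$: a direct glueing of a piece of $\hat\xi$ with a piece of $\tilde\eta$ through $(t^*,x^*)$ is then unavailable, and an auxiliary interpolation using again the interval structure of the attainable sets in dimension one is required.
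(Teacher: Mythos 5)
Your opening constructions are sound and coincide with the paper's: the trajectory $\hat\xi$ from $(0,\bar x(0))$ along which $U\equiv W\equiv u_0:=U(t^*,x^*)$, and the trajectory $\tilde\eta$ into $(T,\tilde x(T))$ along which $U\equiv W\equiv w_0:=W(t^*,x^*)$, are exactly the paper's $\tilde z$ and $\bar z$. The gap is in the final step. You aim to produce a trajectory $\xi$ with $\xi(t^*)=x^*$ and $g_T(\xi(T))\geq u_0$ by concatenating pieces of the trajectories already constructed, using the interval structure of reachable sets. But every terminal segment your construction makes available ends at $(T,\tilde x(T))$, where $g_T=w_0<u_0$ (the backward-reachable interval you invoke consists precisely of trajectories terminating at $\tilde x(T)$), while the one point where you know $g_T=u_0$, namely $\hat\xi(T)$, is reached only by trajectories sitting at $\hat\xi(t^*)\neq x^*$ at time $t^*$, and nothing forces it to be reachable from $(t^*,x^*)$. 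Concatenating a forward piece from $(0,\bar x(0))$ with a backward piece into $(T,\tilde x(T))$ through a common intermediate point merely yields a trajectory along which $U$ and $W$ decrease from $u_0$ to $w_0$, which is fully consistent with the contradiction hypothesis. So the object you need is exactly the one whose existence is equivalent to the desired contradiction, and your building blocks cannot produce it; the acknowledged ``hardest part'' is not a technical case distinction but the entire content of the proof, and it is missing in the ``easy'' case as well.

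The paper closes this gap with a different idea. Since $U(0,\tilde\eta(0))=w_0$ and $U(0,\hat\xi(0))=u_0$, continuity of $U(0,\cdot)$ gives a point $z_0$ strictly between $\tilde\eta(0)$ and $\hat\xi(0)$ with $U(0,z_0)=\tfrac{u_0+w_0}{2}$; the $W$-optimal trajectory $\hat z$ from $(0,z_0)$ then carries the constant value $\tfrac{u_0+w_0}{2}$ for both $U$ and $W$, cannot touch $\hat\xi$ or $\tilde\eta$ (distinct constant values of $U$ at a common point would be absurd), hence starts and ends strictly between them, and must therefore cross the concatenated trajectory $\bar x|_{[0,t^*]}\cup\tilde x|_{[t^*,T]}$, on which $U\equiv u_0$ before $t^*$ and $W\equiv w_0$ after $t^*$ --- a contradiction at the crossing time. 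You would need to import this intermediate-value/crossing argument (or something equivalent) to complete your proof. A minor additional point: you only treat $U(t^*,x^*)>W(t^*,x^*)$; to conclude $U=W$ you also need to exclude $U<W$, which the paper does in one line by combining \eqref{W2}, \eqref{U1} and $U(T,\cdot)=W(T,\cdot)$ to get $W\leq U$ everywhere.
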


\begin{proof}
First, we show that  $W\leq U$. Fix $(t_0,x_0)$. By \eqref{W2}, we choose $\tilde{x}(\cdot)\in\mathrm{Sol}_G(t_0,x_0)$. The latter, together with \eqref{U1}, implies that $$W(t_0,x_0)\leq W(T,\tilde{x}(T))=U(T,\tilde{x}(T))\leq U(t_0,x_0).$$

Next, suppose that $U\neq W$. Then there exists $(t_0,x_0)\in(0,T)\times\R$ such that
$$u_0:=U(t_0,x_0)>w_0:= W(t_0,x_0)$$
By \eqref{U2} and \eqref{W2}, we choose $\bar{x}(\cdot),\,\tilde{x}(\cdot)\in\mathrm{Sol}_G(t_0,x_0)$ such that
\begin{align*}
& U(t,\bar{x}(t))=u_0\hspace{3mm}\tn{for all}\;\;t\in[0,t_0],\\
& W(t,\bar{x}(t))=w_0\;\;\tn{for all}\;\;t\in[t_0,T].
\end{align*}

\vspace{-4mm}
\pagebreak
\noindent By \eqref{W2}, we choose $\tilde{z}(\cdot)\in\mathrm{Sol}_G(0,\bar{x}(0))$. So
$$W(t,\tilde{z}(t))=W(0,\tilde{z}(0))=W(0,\bar{x}(0))=U(0,\bar{x}(0))=u_0\;\;\tn{for all}\;\;t\in[0,T].$$
By \eqref{U2} we choose $\bar{z}(\cdot)\in\mathrm{Sol}_G(T,\tilde{x}(T))$. So
$$U(t,\bar{z}(t))=U(T,\bar{z}(T))=U(T,\tilde{x}(T))=W(T,\tilde{x}(T))=w_0\;\;\tn{for all}\;\;t\in[0,T].$$
Since $\tilde{z}(\cdot)$, $\bar{z}(\cdot)$ are optimal trajectories then the functions $U$, $W$ are constant along them. Consider the case $\bar{z}(0)<\tilde{z}(0)$. Since $U(0,\cdot)$ is continuous then there exists $z_0\in(\bar{z}(0),\tilde{z}(0))$ such that $U(0,z_0)=\frac{u_0+w_0}{2}$. By \eqref{W2} we choose optimal trajectory $\hat{z}:[0,T]\to\R$ for the initial condition $(0,z_0)$. Along the trajectory $(t,\hat{z}(t))$ the functions $U,\,W$ are constant and equal to $\frac{u_0+w_0}{2}$. The trajectory $\hat{z}(\cdot)$ does not cross trajectories $\tilde{z}(\cdot)$ and $\bar{z}(\cdot)$. Therefore $\hat{z}(T)\in(\bar{z}(T),\,\tilde{z}(T))$. Thus, the trajectory $\hat{z}(\cdot)$ has to cross the trajectory $\hat{x}(\cdot)$, where
\begin{equation}
\hat{x}(t)=\left\{ \begin{array}{lcc}
\bar{x}(t) &\!\!\tn{if}\!\!\!\!& t\in[0,t_0],\\
\tilde{x}(t) &\!\!\tn{if}\!\!\!& t\in[t_0,T].
\end{array}\right.
\end{equation}
If exists $t_1\in[0,t_0]$ such that $\hat{z}(t_1)=\bar{x}(t_1)$, then $U(t_1,\bar{x}(t_1))=u_0$ and $U(t_1,\hat{z}(t_1))=\frac{u_0+w_0}{2}$, which follows a contradiction.
If there exists $t_2\in[t_0,T]$ such that $\tilde{x}(t_2)=\hat{z}(t_2)$, then $W(t_2,\tilde{x}(t_2))=w_0$ and $ W(t_2,\hat{z}(t_2))=\frac{u_0+w_0}{2}$, which follows the contradiction.
\end{proof}

Theorem \ref{UV} can be reformulated in the following way.

\begin{Cor}
Assume that the Hamiltonian $H:\R\times\R\to\R$ satisfies \eqref{A} and $H(x,\cdot)$ is positively homogeneous. If an initial function $g_0\in C(\R,\R)$ is reconstructible in time $T$ and $u:[0,T]\times\R\to\R$ is the forward viscosity solution of \eqref{HJ1} satisfying $u(0,\cdot)=g_0(\cdot)$, then $u$ is a bilateral viscosity solution of \eqref{HJ1}.
\end{Cor}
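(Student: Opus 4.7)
The plan is to derive the corollary as a direct consequence of Theorem \ref{UV}, combined with the value-function representation supplied by Theorem \ref{OT}. Since $H(x,\cdot)$ is positively homogeneous and satisfies \eqref{A}, the Lagrangian $L$ from \eqref{tran1} vanishes on $G(x)$ and equals $+\infty$ elsewhere, with $G:\R\leadsto\R$ a Lipschitz continuous set-valued map with nonempty, compact, convex values, as recalled after \eqref{G}. Consequently the Bolza problem degenerates to the Mayer problem governed by the differential inclusion \eqref{DI}, and the value functions \eqref{fvf1}, \eqref{fvf2} coincide with \eqref{ValueU}, \eqref{ValueW}.

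First, by Theorem \ref{OT}(a), I identify $u$ with the forward Mayer value function $U$ from \eqref{ValueU} associated to the initial datum $g_0$. Setting $g_T := u(T,\cdot)$, I let $w$ be the unique backward viscosity solution of \eqref{HJ1} satisfying $w(T,\cdot)=g_T$; by the same theorem $w$ is the inverse Mayer value function $W$ from \eqref{ValueW}. Both $U$ and $W$ are then continuous and satisfy the dynamic programming properties \eqref{U1}--\eqref{U2} and \eqref{W1}--\eqref{W2}, respectively.

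Second, by Definition \ref{Reconstruction}, reconstructibility of $g_0$ in time $T$ means $w(0,\cdot)=g_0(\cdot)=u(0,\cdot)$, i.e.\ $U(0,\cdot)=W(0,\cdot)$ on $\R$, while $U(T,\cdot)=W(T,\cdot)$ is built into the construction of $w$. All hypotheses of Theorem \ref{UV} are met, crucially because $n=1$, so I conclude $U\equiv W$ on the entire domain $[0,T]\times\R$, that is $u\equiv w$.

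Finally, since $u$ is a forward viscosity solution and $w$ is a backward viscosity solution of \eqref{HJ1}, their common value $u=w$ satisfies simultaneously all four conditions \eqref{fsub}, \eqref{fsuper}, \eqref{bsub}, \eqref{bsuper}, which is precisely the definition of a bilateral viscosity solution. I do not expect a genuine obstacle in this scheme: the delicate part, namely the one-dimensional planar trajectory-crossing argument that forces $U=W$ in the interior of the strip, has already been carried out in the proof of Theorem \ref{UV}. The role of the present corollary is only to translate the PDE hypotheses into the value-function language on which that theorem operates.
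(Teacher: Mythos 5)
Your proposal is correct and follows exactly the route the paper intends: the corollary is stated as a reformulation of Theorem \ref{UV}, obtained by identifying $u$ and $w$ with the Mayer value functions $U$ and $W$ via Theorem \ref{OT}, noting that reconstructibility gives $U(0,\cdot)=W(0,\cdot)$ while $U(T,\cdot)=W(T,\cdot)$ holds by construction, and then invoking Theorem \ref{UV} to conclude $u=w$, hence bilaterality. No gaps.
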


\subsection{Bolza Problem with a convex initial function}\label{s-cbp}
We show that if the Hamiltonian $H(x,p)$ is concave with respect to the first variable and is convex with respect to the second variable then every convex initial function $g_0$ is reconstructible in every positive time $T$. Moreover, equality $u=v$ holds true in this case.

\begin{Prop}\label{max}
Assume that $H:\R^n\times\R^n\to\R$ satisfies \eqref{A} and that $g_0,\,g_T\in C(\R^n,\R)$. Let $L:\R^n\times\R^n\to\R\cup\{+\infty\}$ be given by \eqref{tran1} and $U,W:[0,T]\times\R^n\to\R$ be given by \eqref{fvf1}, \eqref{fvf2}, respectively. If $\bar{x}(\cdot)\in\mathcal{A}([0,T],\R^n)$ is a maximal optimal trajectory in the Bolza problem and $U(T,\cdot)=W(T,\cdot)$, then $\bar{x}(\cdot)$ is a maximal optimal trajectory in the inverse Bolza problem and $U(t,\bar{x}(t))=W(t,\bar{x}(t))$ for all $t\in[0,T]$.
 \end{Prop}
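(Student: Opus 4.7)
The plan is to exploit two complementary monotonicity properties: an equality along $\bar{x}(\cdot)$ coming from its optimality in the Bolza problem, and an inequality for $W$ that holds along every trajectory. The key tool from the optimal-control side is the standard dynamic programming inequality: for every trajectory $x(\cdot)\in\mathcal{A}([t_1,t_2],\R^n)$ one has
\begin{equation*}
W(t_2,x(t_2)) - W(t_1,x(t_1)) \;\leq\; \int_{t_1}^{t_2} L(x(s),\dot{x}(s))\,\it{ds},
\end{equation*}
which follows immediately from the definition of $W$ by concatenating $x(\cdot)$ with an arbitrary continuation on $[t_2,T]$ and passing to the supremum. Applied to the maximal optimal Bolza trajectory $\bar{x}(\cdot)$ on $[t,T]$ this yields
\begin{equation*}
W(t,\bar{x}(t)) \;\geq\; W(T,\bar{x}(T)) - \int_{t}^{T} L(\bar{x}(s),\dot{\bar{x}}(s))\,\it{ds}.
\end{equation*}

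Next I would invoke the optimality identity recalled in Section \ref{prel}: since $\bar{x}(\cdot)$ is maximal optimal in the Bolza problem,
\begin{equation*}
U(T,\bar{x}(T)) - U(t,\bar{x}(t)) \;=\; \int_t^T L(\bar{x}(s),\dot{\bar{x}}(s))\,\it{ds} \quad \textrm{for every } t\in[0,T].
\end{equation*}
Using the hypothesis $W(T,\cdot) = U(T,\cdot)$ and combining the last two displays gives the pointwise bound $W(t,\bar{x}(t)) \geq U(t,\bar{x}(t))$ for every $t\in[0,T]$.

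For the reverse inequality I would reuse the global comparison $W\leq U$ that already appears inside the optimal-control proof of Theorem \ref{main}: because $U$ is a forward viscosity solution of \eqref{HJ1} it is, by Theorem \ref{OT1}(c), a bilateral supersolution and hence a backward viscosity supersolution, while $W$ is a backward viscosity (sub)solution, and the two agree at $t=T$ by assumption; Theorem \ref{OT1}(b) then delivers $W\leq U$ on the whole cylinder $[0,T]\times\R^n$. Specialising this to $\bar{x}(t)$ matches the opposite inequality from the previous paragraph, so $U(t,\bar{x}(t)) = W(t,\bar{x}(t))$ for all $t\in[0,T]$, which is the second assertion.

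Finally, the first assertion falls out for free: evaluating the equality just obtained at $t=0$ and $t=T$ and inserting it into the identity from step two gives
\begin{equation*}
W(T,\bar{x}(T)) - W(0,\bar{x}(0)) \;=\; \int_0^T L(\bar{x}(s),\dot{\bar{x}}(s))\,\it{ds},
\end{equation*}
which is precisely the criterion for $\bar{x}(\cdot)$ to be a maximal optimal trajectory in the inverse Bolza problem. I expect no serious obstacle in this argument; the only nontrivial ingredient is the comparison $W\leq U$, but that is already proved inside Section \ref{pmr}, and the rest is just matching the DPP inequality for $W$ to the DPP equality for $U$ along $\bar{x}(\cdot)$.
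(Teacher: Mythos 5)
Your proof is correct and follows essentially the same route as the paper: the DPP equality for $U$ along the maximal optimal Bolza trajectory, the one-sided DPP inequality for $W$ coming from its supremum definition, the terminal identification $U(T,\cdot)=W(T,\cdot)$, and the global comparison $W\leq U$ (which the paper simply cites from Theorem \ref{main} rather than re-deriving). The only cosmetic difference is that you spell out the justifications of the two auxiliary facts that the paper takes as known.
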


\begin{proof}
If $U(T,\cdot)=W(T,\cdot)$ and $\bar{x}:[0,T]\to\R^n$ is a maximal optimal trajectory in the Bolza problem, then by the Dynamic Programming Principle for the Bolza problem we obtain
\begin{eqnarray*}
U(t,\bar{x}(t))  &=& U(T,\bar{x}(T))-\int_t^T L(\bar{x}(s),\dot{\bar{x}}(s))\,\it{ds}\\
&=& W(T,\bar{x}(T))-\int_t^T L(\bar{x}(s),\dot{\bar{x}}(s))\,\it{ds}\;\;\leq\;\; W(t,\bar{x}(t))
\end{eqnarray*}
for every $t\in[0,T]$. By Theorem \ref{main} we have that $W\leq U$. 

\pagebreak
\noindent Therefore, for every $t\in[0,T]$,
\begin{equation*}
U(t,\bar{x}(t))=W(t,\bar{x}(t))=g_T(\bar{x}(T))-\int_t^T L(\bar{x}(s),\dot{\bar{x}}(s))\,\it{ds}.
\end{equation*}
Taking $t=0$ in the above property  we conclude that $\bar{x}(\cdot)$ is a maximal optimal trajectory in the inverse Bolza problem.
\end{proof}

\begin{Th}\label{vbshjb}
Assume that the Hamiltonian $H:\R^n\times\R^n\to \R$ satisfies
\begin{equation}\label{B}
\left\{\begin{array}{l}
p\to H(x,p)\;\; \it{is convex for every}\;\;  x\in\R^n,\\[0.3mm]
x\to H(x,p)\;\; \it{is concave for every}\;\; p\in\R^n,\\[0.3mm]
|H(x,p)|\leq M(1+|x|)(1+|p|)\;\; \it{for all}\;\; x,p\in\R^n\;\; \it{and some}\;\; M\geq 0.
\end{array}\right.\tag{B}
\end{equation}
If an initial function $g_0:\R^n\to\R$ is convex, then the unique forward viscosity solution $u:[0,\infty)\times\R^n\to\R$ of \eqref{HJ1} satisfying an initial condition $u(0,\cdot)=g_0(\cdot)$ is a bilateral viscosity solution. It means that every convex initial function is reconstructible in every time $T>0$.
\end{Th}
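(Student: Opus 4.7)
My plan is to show that the forward viscosity solution $u$ is itself a backward viscosity solution of \eqref{HJ1}. Once this is established, $v:=u$ is a backward viscosity solution with $v(0,\cdot)=g_0(\cdot)$, and Theorem \ref{main} gives reconstructibility immediately.

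By Theorem \ref{OT}(a), we can identify $u$ with the forward value function $U$ of the Bolza problem, and set $g_T(\cdot):=U(T,\cdot)$ with the corresponding backward value function $W$; by Theorem \ref{OT}(a), $W$ is the (unique) backward viscosity solution of \eqref{HJ1} with $W(T,\cdot)=g_T(\cdot)$. Thus showing $u=W$ on $[0,T]\times\R^n$ is equivalent to showing $u$ is bilateral. Proposition \ref{max} is tailor-made for this: along every \emph{maximal} optimal trajectory $\bar{x}:[0,T]\to\R^n$ in the Bolza problem one has $U(t,\bar{x}(t))=W(t,\bar{x}(t))$ for every $t\in[0,T]$. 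The problem therefore reduces to the geometric statement that every point $(t_0,x_0)\in[0,T]\times\R^n$ lies on some maximal optimal trajectory of the Bolza problem.

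To establish this coverage, I would proceed as follows. By Theorem \ref{OT}(b) there exists an optimal trajectory $\bar{x}:[0,t_0]\to\R^n$ with $\bar{x}(t_0)=x_0$; the task is to extend it to an optimal trajectory on $[0,T]$. I would invoke the form of the Pontryagin Maximum Principle developed in \cite{R-W-1} for generalized problems of Bolza with saddle (concave-convex) Hamiltonians: there is an absolutely continuous arc $p:[0,t_0]\to\R^n$ satisfying the Hamiltonian inclusion $(\dot{\bar{x}}(s),-\dot{p}(s))\in\partial H(\bar{x}(s),p(s))$ a.e. (in the Rockafellar saddle-subdifferential sense) together with the transversality condition $-p(0)\in\partial g_0(\bar{x}(0))$, where $\partial g_0$ is the convex subdifferential. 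Because $H$ satisfies \eqref{B} and $g_0$ is convex, the Hamiltonian inclusion admits a (Lipschitz or at least absolutely continuous) continuation of $(\bar{x},p)$ to $[t_0,T]$ by standard existence theorems for convex/concave Hamiltonian systems. On the extended interval, the pair $(\bar{x},p)$ still satisfies PMP with the same transversality at $s=0$.

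The heart of the argument is then the \emph{sufficiency} of PMP under \eqref{B} and convex $g_0$: if $(\bar{x},p)$ solves the concave-convex Hamiltonian inclusion with $-p(0)\in\partial g_0(\bar{x}(0))$, then $\bar{x}$ is a maximal optimal trajectory of the Bolza problem ending at $(T,\bar{x}(T))$. This is the usual convex-duality verification: for any admissible $y(\cdot)$ with $y(T)=\bar{x}(T)$, the convex inequalities $g_0(y(0))\geq g_0(\bar{x}(0))-\langle p(0),y(0)-\bar{x}(0)\rangle$ and $L(y(s),\dot{y}(s))-L(\bar{x}(s),\dot{\bar{x}}(s))\geq\langle p(s),\dot{y}(s)-\dot{\bar{x}}(s)\rangle+\langle\dot{p}(s),y(s)-\bar{x}(s)\rangle$ (which follow from the Fenchel-Young equality along $(\bar{x},p)$ and from the convexity of $L$ in $(x,v)$ inherited from the saddle structure of $H$) combine via integration by parts to give the desired inequality. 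Applying Proposition \ref{max} to such an extended $\bar{x}$ yields $U(t_0,x_0)=W(t_0,x_0)$, and since $(t_0,x_0)$ was arbitrary, $u=U=W$ is bilateral, proving the theorem.

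The main obstacle I expect is the rigorous use of the Rockafellar-Wolenski PMP in the concave-convex / generalized-gradient setting: one must match the differentiability hypotheses of \cite{R-W-1} with the bare assumptions \eqref{B}, guarantee the forward extendability of the Hamiltonian inclusion past $t_0$, and check that the sufficient-optimality argument goes through without additional coercivity beyond what \eqref{B} provides.
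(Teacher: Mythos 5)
Your proposal follows essentially the same route as the paper: reduce to showing that every point $(t_0,x_0)$ lies on a maximal optimal trajectory, obtain an adjoint arc via the Rockafellar--Wolenski maximum principle, extend the resulting Hamiltonian inclusion $(\dot{x},\dot{p})\in\partial_{-}^{\,p}H\times-\partial_{+}^{\,x}H$ forward to $[0,T]$ using the linear growth from \eqref{A}, invoke the sufficiency of the maximum principle under \eqref{B} and convex $g_0$, and conclude with Proposition \ref{max}. The only cosmetic difference is that you sketch the convex-duality verification of sufficiency, which the paper simply cites from \cite{R-W-1}.
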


By \cite[Lemma 4.1]{AM3} we obtain that if a Hamiltonian $H:\R^n\times\R^n\to\R$ satisfies \eqref{B}, then it  satisfies \eqref{A}. For example, the Hamiltonian $H:\R^n\times\R^n\to \R$ given by the formula
\begin{equation*}
H(x,p)=\phi(\,|\,A\cdot p\,|\,)-\psi(\,|\,B\cdot x\,|\,)+\langle\,C\cdot x,\,D\cdot p\,\rangle,
\end{equation*}
where   $\phi,\psi:[0,\infty)\to\R$ are nondecreasing, convex functions with linear growth and $A$, $B$, $C$, $D$ are $n\times n$ real matrices, satisfies \eqref{B}. However, the Hamiltonian $H:\R^n\times\R^n\to \R$ defined by $H(x,p)=|x|\,|p|$ satisfies \eqref{A}, but it does not satisfy \eqref{B}.

\vspace{2mm}
In the proof of Theorem \ref{vbshjb} we need the following result of Rockafellar-Wolenski \cite{R-W-1}.
\begin{Th}\label{Th_R_W}
Assume that the Hamiltonian $H:\R^n\times\R^n\to \R$ satisfies \eqref{B} and the initial function $g_0:\R^n\to\R$ is convex. Then for every optimal trajectory $x(\cdot)\,\in\,\mathcal{A}([0,t_0],\R^n)$  in the Bolza problem there exists $p(\cdot)\,\in\,\mathcal{A}([0,t_0],\R^n)$ such that
\begin{equation}\label{zmp}\begin{split}
&\dot{x}(t)\in\partial_{-}^{\,p}H(x(t),p(t)),\;\; -\dot{p}(t)\in\partial_{+}^{\,x}H(x(t),p(t)),\;\;\tn{a.e.}\;\;t\in[0,t_0], \\[1mm]
&p(0)\in\partial_{-}g_0(x(0))\;\;\tn{and}\;\;x(t_0)=x_0.
\end{split}\end{equation}
Conversely, if  functions $x(\cdot),p(\cdot)\,\in\,\mathcal{A}([0,t_0],\R^n)$ solve system \eqref{zmp}, then $x(\cdot)$ is an optimal trajectory in the Bolza problem on the time interval $[0,\,t_0]$.
\end{Th}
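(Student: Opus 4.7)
The plan is to exploit the full convex-analytic structure granted by hypothesis \eqref{B}. First I would observe that since $H(x,p)$ is convex in $p$ and concave in $x$, the Lagrangian $L(x,v)=\sup_{p}\{\langle v,p\rangle - H(x,p)\}$ is jointly convex in $(x,v)$ (a classical fact from Rockafellar's theory of saddle functions: the conjugate of a concave-convex function in one variable is convex-convex). Since $g_0$ is convex, the Bolza functional $J(x):=g_0(x(0)) + \int_0^{t_0} L(x(s),\dot x(s))\,\it{ds}$ is convex on the affine space of arcs $x(\cdot)\in\mathcal A([0,t_0],\R^n)$ with fixed right endpoint $x(t_0)=x_0$. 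Thus the problem becomes a convex variational problem, and the task reduces to characterizing its minimizers.

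For the direct (necessary) implication, I would derive a subdifferential Euler-Lagrange inclusion by a standard convex-duality / perturbation argument: perturb the cost by linear functionals of $x(0)$ and $\dot x(\cdot)$, invoke Fenchel duality on the resulting convex program, and extract a Lagrange multiplier arc $p(\cdot)$ satisfying $(\dot p(t), p(t)) \in \partial L(x(t),\dot x(t))$ almost everywhere, together with the transversality condition $p(0)\in\partial_{-}g_0(x(0))$ produced by the free left endpoint. Absolute continuity of $p(\cdot)$ follows from the $L^1$-bound on $\dot p$ implicit in the construction combined with the linear growth afforded by \eqref{B}.

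The next step is to translate the Euler-Lagrange subdifferential inclusion on $L$ back into the Hamiltonian inclusion on $H$. Using the partial Legendre transform relating $L$ and $H$, the equivalence
\begin{equation*}
p\in\partial_{v}L(x,v) \;\Longleftrightarrow\; v\in\partial_{-}^{\,p}H(x,p) \;\Longleftrightarrow\; L(x,v)+H(x,p)=\langle v,p\rangle
\end{equation*}
gives the primal equation $\dot x(t)\in\partial_{-}^{\,p}H(x(t),p(t))$. For the adjoint equation, since $L$ is convex in $x$ and $H$ is concave in $x$, at a Fenchel-equality point $\partial_{x}L(x,v)=-\partial_{+}^{\,x}H(x,p)$, which converts $\dot p(t)\in\partial_{x}L(x(t),\dot x(t))$ into $-\dot p(t)\in\partial_{+}^{\,x}H(x(t),p(t))$. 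The converse (sufficiency) direction is then almost formal thanks to convexity: given any admissible competitor $y(\cdot)$ with $y(t_0)=x_0$, combine the Fenchel--Young inequality $L(y,\dot y)\geq\langle\dot y,p\rangle-H(y,p)$, the Fenchel equality along $x$, the concavity bound $H(y,p)-H(x,p)\leq\langle-\dot p, y-x\rangle$, the subgradient inequality $g_0(y(0))-g_0(x(0))\geq\langle p(0), y(0)-x(0)\rangle$, and integrate by parts using $y(t_0)=x(t_0)$; the cross terms telescope and yield $J(y)\geq J(x)$.

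The main obstacle is producing the adjoint arc $p(\cdot)$ with the right regularity: a direct convex-duality argument typically delivers $p$ only as a function of bounded variation, and promoting it to an absolutely continuous arc with an $L^1$ derivative requires a careful estimate exploiting the linear growth in \eqref{B} together with a priori bounds on $\dot x$ coming from superlinearity of $L(x,\cdot)$ (equivalently, from the Lipschitz-in-$p$ estimate on $H$). A secondary technical point is that $\partial_{-}g_0$ must be nonempty at $x(0)$, which is automatic since $g_0:\R^n\to\R$ is everywhere finite and convex, so $\partial_{-}g_0(x)\neq\emptyset$ for every $x\in\R^n$; this also guarantees the transversality condition can be realized.
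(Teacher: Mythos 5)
This theorem is not proved in the paper at all: it is imported verbatim from Rockafellar--Wolenski \cite{R-W-1}, so there is no in-paper argument to compare against. Your sketch is in fact a faithful reconstruction of the strategy of that reference: joint convexity of $L$ in $(x,v)$ as the partial conjugate of the concave--convex $H$, a convex-duality derivation of the generalized Euler--Lagrange inclusion with transversality $p(0)\in\partial_{-}g_0(x(0))$, the equivalence between that inclusion and the Hamiltonian system \eqref{zmp}, and the Fenchel--Young/integration-by-parts argument for sufficiency. The sufficiency half as you present it is complete and correct. For the necessity half you have correctly identified, but not closed, the real gap: producing a multiplier arc $p(\cdot)$ that is absolutely continuous (not merely of bounded variation) and doing so for \emph{every} optimal arc. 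In \cite{R-W-1} this occupies a substantial duality analysis (finiteness of the dual Bolza value under the growth bound in \eqref{B}, plus the compactness of $\mathrm{dom}\,L(x,\cdot)$ coming from the linear growth of $H$ in $p$), and a one-sentence appeal to ``careful estimates'' does not substitute for it. One further imprecision: the passage from the Euler--Lagrange inclusion to \eqref{zmp} should be stated for the \emph{joint} subdifferential $(\dot p(t),p(t))\in\partial L(x(t),\dot x(t))$; splitting into the partial subdifferentials $\partial_{x}L$ and $\partial_{v}L$ separately is not equivalent in general for a jointly convex function, although the standard subgradient formula for conjugate saddle functions gives exactly the equivalence you need in the joint form. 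In short: the route is the right one (and the same as the cited source), but as a self-contained proof it is a sketch whose hardest step is deferred.
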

The subdifferential $\partial_{-}^{\,p}H(x,y)$  of the function $H(x,\cdot)$ at the point $y$  and the superdifferential $\partial_{+}^{\,x}H(x,y)$ of the function $H(\cdot,y)$ at the point $x$ was recalled in Definition \ref{vs}.  The subdifferential of a convex function and the superdifferential of a concave function can be defined in a simpler way (comp. \cite{R-W}).
\begin{proof}[Proof of Theorem \ref{vbshjb}]
We define the set-valued map $E:\R^{2n}\leadsto\R^{2n}$ by the formula
\begin{equation*}
E(x,p)\,=\,\partial_{-}^{\,p}H(x,p)\times-\partial_{+}^{\,x}H(x,p).
\end{equation*}
In view of \cite[Sect. 6]{R-W-1}, the set-valued map $E$ has nonempty, compact, convex values and closed graph. By the assumption \eqref{A}, we have $\|\partial_{-}^{\,p}H(x,p)\|\leq M(1+|x|)$ and $\|\partial_{+}^{\,x}H(x,p)\|\leq M(1+|p|)$ for all $x,p\in\R^n$, where $\|K\|=\sup_{\xi\in K}|\xi|$. Therefore, $\|E(x,p)\|\leq 2M(1+|(x,p)|)$  for all $x,p\in\R^n$.

Fix $(t_0,x_0)\in(0,T)\times\R^n$. Then there exists an optimal trajectory $x(\cdot)\,\in\,\mathcal{A}([0,t_0],\R^n)$  in the Bolza problem such that $x(t_0)=x_0$. By Theorem \ref{Th_R_W}, there exists $p(\cdot)\,\in\,\mathcal{A}([0,t_0],\R^n)$ solving system \eqref{zmp}. Thus, the pair $(x,p)(\cdot)$ is the solution of the differential inclusion
\begin{equation*}
(\dot{x}(t),\dot{p}(t))\in E(x(t),p(t))\;\;\tn{a.e.}\;\,t\in[0,t_0].
\end{equation*}
Through the above properties of $E$, the theory of differential inclusions \cite{A-C} ensures the possibility of the extension of the solution $(x,p)(\cdot)$ from $[0,t_0]$ to $[0,T]$.  By Theorem~\ref{Th_R_W}, the condition \ref{zmp} is sufficient to optimality of $x(\cdot)$. So, the function $x(\cdot)$ extended onto $[0,T]$  is the maximal optimal trajectory in the Bolza problem. By Proposition \ref{max}, we obtain that $U(t,x(t))=W(t,x(t))$ for all $t\in[0,T]$. In particular, $U(t_0,x_0)=W(t_0,x_0)$. Since $(t_0,x_0)\in(0,T)\times\R^n$ was arbitrary, we conclude that $U=W$ on $(0,T)\times\R^n$. By continuity of $U$ and $W$ we have  $U=W$ on $[0,T]\times\R^n$.
\end{proof}

\begin{Rem}
In \cite{B-CD}, it was shown that a semiconvex functions that are forward viscosity solutions are bilateral ones. Moreover, in Theorem 7.4.13 from  \cite{C-S-2004} it was shown that the value function in the optimal control Bolza problem is semiconvex. We do not know whether  for a Hamiltonian satisfying \eqref{B} does exist an optimal control representation satisfying the assumptions of \cite[Thm. 7.4.13]{C-S-2004}. Therefore, based on Theorem 7.4.13 from  \cite{C-S-2004}, we cannot conclude that the value function corresponding to the variational Bolza problem is semiconvex.
\end{Rem}


\vspace{-2mm}


\end{document}